\documentclass[a4paper]{amsart}
\usepackage[utf8]{inputenc}
\usepackage[ngerman, english]{babel}
\usepackage{amsmath,amsthm,amssymb,amsfonts}
\usepackage{mathrsfs}
\usepackage{ifthen}
\usepackage{enumerate}
\usepackage{comment}
\usepackage[pdfauthor={Martin Halla},pdftitle={Electromagnetic Stekloff eigenvalues: approximation analysis},
unicode,colorlinks=true,linkcolor=black,citecolor=black,urlcolor=black,pagebackref=false,breaklinks]{hyperref}
\usepackage{hyphsubst}
\usepackage{xcolor}
\usepackage{graphicx}
\usepackage{psfrag}
\usepackage{pst-all}

\newtheorem{theorem}{Theorem}[section]
\newtheorem{lemma}[theorem]{Lemma}
\newtheorem{corollary}[theorem]{Corollary}

\newtheorem{assumption}[theorem]{Assumption}

\newcommand{\Xt}{\tilde X}
\newcommand{\At}{\tilde A}
\newcommand{\Tt}{\tilde T}
\newcommand{\Zt}{\tilde Z}
\newcommand{\Vt}{\tilde V}
\newcommand{\Wt}{\tilde W}
\newcommand{\Pt}{\tilde P}

\newcommand{\dtilde}[1]{\tilde{\raisebox{0pt}[0.85\height]{$\tilde{#1}$}}}
\newcommand{\Xtt}{\dtilde X}
\newcommand{\Att}{\dtilde A}
\newcommand{\Ttt}{\dtilde T}
\newcommand{\Ptt}{\dtilde P}

\DeclareMathOperator{\I}{I}

\newcommand{\dd}{\mathrm{d}}

\newcommand{\nv}{\nu}

\newcommand{\ol}[1]{\overline{#1}}

\newcommand{\spl}{\langle}
\newcommand{\spr}{\rangle}
\newcommand{\bpm}{\begin{pmatrix}}
\newcommand{\epm}{\end{pmatrix}}

\renewcommand{\div}{\operatorname{div}}
\DeclareMathOperator{\curl}{curl}

\DeclareMathOperator{\tr}{tr}

\DeclareMathOperator{\spn}{span}

\renewcommand{\dim}{\operatorname{dim}}
\DeclareMathOperator{\ran}{ran}



\newcommand{\setC}{\mathbb{C}}

\newcommand{\setK}{\mathbb{K}}

\newcommand{\setN}{\mathbb{N}}

\newcommand{\setR}{\mathbb{R}}


\newcommand{\boldH}{\mathbf{H}}

\newcommand{\boldL}{\mathbf{L}}

\newcommand{\boldX}{\mathbf{X}}




\title{Electromagnetic Stekloff eigenvalues: approximation analysis}
\author{Martin Halla}
\email{martin.halla@protonmail.com}
\subjclass[2010]{35J25, 35R30, 65H17, 65N25.}
\keywords{Stekloff eigenvalues, nondestructive testing, T-coercive.}
\date{September 2nd, 2019.}

\begin{document}

\begin{abstract}
We continue the work of [Camano, Lackner, Monk, SIAM J.\ Math.\ Anal., Vol.\ 49, No.\ 6, pp.\ 4376-4401 (2017)]
on electromagnetic Stekloff eigenvalues. The authors recognized that in general the eigenvalues due not correspond
to the spectrum of a compact operator and hence proposed a modified eigenvalue problem with the desired properties.

The present article considers the original and the modified electromagnetic Stekloff eigenvalue problem.
We cast the problems as eigenvalue problem for a holomorphic operator function $A(\cdot)$. We construct a
``test function operator function'' $T(\cdot)$ so that $A(\lambda)$ is weakly $T(\lambda)$-coercive for all
suitable $\lambda$, i.e.\ $T(\lambda)^*A(\lambda)$ is a compact perturbation of a coercive operator.
The construction of $T(\cdot)$ relies on a suitable decomposition of the function space into subspaces and an apt
sign change on each subspace.

For the approximation analysis, we apply the framework of T-compatible Galerkin approximations.
For the modified problem, we prove that convenient commuting projection operators imply T-compatibility and hence
convergence.
For the original problem, we require the projection operators to satisfy an additional commutator property involving
the tangential trace. The existence and construction of such projection operators remain open questions.
\end{abstract}
\maketitle

\section{Introduction}\label{sec:introduction}
Novel nondestructive evaluation methods based on inverse scattering~\cite{CakoniColton:06} give rise to a multitude of
new eigenvalue problems. Among these are so-called transmission eigenvalue problems~\cite{CakoniColtonHaddar:16}
and Stekloff eigenvalue problems~\cite{CakoniColtonMengMonk:16}. Not all of these eigenvalue problems fall into classes
which are covered in classical literature. Among the important questions on these eigenvalue problems are
\begin{itemize}
 \item Fredholm properties (which imply the discreteness of the spectrum),
 \item the existence of eigenvalues,
 \item properties of the eigenvalues
 \item and reliable computational approximations.
\end{itemize}
The electromagnetic Stekloff eigenvalue problem to find $(\lambda,u)$ so that
\begin{align*}
\curl\curl u-\omega^2\epsilon u &=0 \quad\text{in }\Omega,\\
\nv\times\curl u +\lambda\, \nv\times u\times \nv&=0\quad\text{at }\partial\Omega.
\end{align*}
was considered in the recent publication~\cite{CamanoLacknerMonk:17}. Therein the authors of~\cite{CamanoLacknerMonk:17}
considered the case that $\Omega$ is a ball and the material parameter $\epsilon$ is constant. For this setting they
proved the existence of two infinite sequences of eigenvalues, one converging to zero and one converging to infinity.
Consequently the eigenvalue problem can't be transformed to an eigenvalue problem for a compact operator.
This observation led the authors of~\cite{CamanoLacknerMonk:17} to discard the original eigenvalue problem and
to modify instead the boundary condition to
\begin{align*}
\nv\times\curl u+\lambda S \nv\times u\times \nv
&=0\quad\text{at }\partial\Omega.
\end{align*}
with a suitable operator $S$. The authors of~\cite{CamanoLacknerMonk:17} proved that the modified eigenvalue problem can
indeed be transformed to an eigenvalue problem for a compact operator.\\

In this note we consider the original as well as the modified electromagnetic Stekloff eigenvalue problem.
We formulate the problems as holomorphic operator function eigenvalue problems to find $(\lambda,u)\in \setC\times X$
so that $A(\lambda)u=0$.

We assume reasonable conditions on the material parameters and the domain to analyze the Fredholmness of $A(\lambda)$.
We prove that for the original problem $A(\lambda)$ is Fredholm if and only if $\lambda\in\setC\setminus\{0\}$,
while for the modified problem $A(\lambda)$ is Fredholm for all $\lambda\in\setC$. For our analysis we construct an
operator function $T(\cdot)$ which is bijective at each $\lambda\in\setC\setminus\{0\}$ (respective $\lambda\in\setC$)
so that $T(\lambda)^*A(\lambda)$ is a compact perturbation of a coercive operator. The construction of $T(\cdot)$
involves a decomposition of the function space into subspaces and an apt sign change on each subspace.

We apply the framework of \cite{Halla:19Tcomp} to analyze the convergence of Galerkin approximations. To this end,
we need to construct apt approximations of $T(\cdot)$. We prove for the modified problem, that the existence of
convenient commuting projections imply the existence of such apt approximations of $T(\cdot)$.
For the original problem, we require the projection operators to satisfy an additional commuting property, which
involves the tangential trace, to establish the same result. However, the existence and construction of such projection
operators isn't answered in this article and apt for future research.

We report on the existence and behavior of eigenvalues to the electromagnetic Stekloff eigenvalue problems
in the self adjoint case in the companion article \cite{Halla:19StekloffExist}.\\

The remainder of this article is organized as follows. In Section~\ref{sec:generalsetting} we set our notation and formulate our
assumptions on the domain and the material parameters. We also recall some classic regularity, embedding and decomposition
results which will be essential for our analysis.
In Section~\ref{sec:AFredholm} we introduce the considered electromagnetic Stekloff eigenvalue problem and define the
associated holomorphic operator function $A_X(\cdot)$. We define $T(\cdot)$ and prove that $A_X(\cdot)$ is weakly
$T(\cdot)$-coercive on $\setC\setminus\{0\}$ while $A_X(0)$ is not Fredholm.
In Section~\ref{sec:approximation} we prove that Galerkin approximations which admit uniformly bounded commuting
projections are asymptoticly reliable.
In Section~\ref{sec:modified} we introduce the modified electromagnetic Stekloff eigenvalue problem and define the
associated holomorphic operator function $\At_{\Xt}(\cdot)$. We define $\Tt$ and prove that $\At_{\Xt}(\cdot)$
is weakly $\Tt$-coercive. We introduce a reformulation of the eigenvalue problem by means of an operator function
$\Att^l(\cdot)$, which avoids the explicit appearance of $S$. Likewise we define $\Ttt^l(\cdot)$ and prove that
$\Att^l(\cdot)$ is weakly $\Ttt^l(\cdot)$-coercive.
In Section~\ref{sec:approximationMod} we prove that Galerkin approximations which admit uniformly bounded commuting
projections are asymptoticly reliable. We further discuss the computational implementation of the Galerkin approximations.

\section{General setting}\label{sec:generalsetting}
In this section we set our notation, formulate assumptions on the domain and material parameters, recall necessary
results from different literature.

\subsection{Functional analysis}
For generic Banach spaces $(X, \|\cdot\|_X)$, $(Y, \|\cdot\|_Y)$ denote $L(X,Y)$
the space of all bounded linear operators from $X$ to $Y$ with operator norm
$\|A\|_{L(X,Y)}:=\sup_{u\in X\setminus\{0\}} \|Au\|_Y/\|u\|_X$, $A\in L(X,Y)$.
We further set $L(X):=L(X,X)$. For generic Hilbert spaces
$(X, \spl\cdot,\cdot\spr_X)$, $(Y, \spl\cdot,\cdot\spr_Y)$ and $A\in L(X,Y)$ we denote $A^*\in L(Y,X)$ its adjoint
operator defined through $\spl u,A^*u' \spr_X=\spl Au,u'\spr_Y$ for all $u\in X,u'\in Y$.
We say that an operator $A\in L(X)$ is coercive if $\inf_{u\in X\setminus\{0\}}
|\spl Au,u\spr_X|/\|u\|^2_X>0$. We say that $A\in L(X)$ is weakly coercive, if there
exists a compact operator $K\in L(X)$ so that $A+K$ is coercive. For bijective $T\in L(X)$
we say that $A$ is (weakly) $T$-coercive, if $T^*A$ is (weakly) coercive.
Let $\Lambda\subset\setC$ be open and consider operator functions $A(\cdot), T(\cdot)\colon \Lambda\to L(X)$
so that $T(\lambda)$ is bijective for all $\lambda\in\Lambda$. We call $A(\cdot)$ (weakly) ($T(\cdot)$-)coercive
if $A(\lambda)$ is (weakly) ($T(\lambda)$-)coercive for all $\lambda\in\Lambda$.
We denote the spectrum of $A(\cdot)$ as
$\sigma\big(A(\cdot)\big):=\{\lambda\in\Lambda\colon A(\lambda)\text{ is not bijective}\}$ and
the resolvent set as $\rho\big(A(\cdot)\big):=\Lambda\setminus\sigma\big(A(\cdot)\big)$.
For a subspace $X_n\subset X$ denote $P_n \in L(X,X_n)$ the orthogonal projection.
Consider $A\in L(X)$ to be weakly $T$-coercive. For a sequence $(X_n)_{n\in\setN}$ of finite dimensional subspaces
$X_n\subset X$ with $\lim_{n\in\setN}\|u-P_nu\|_X=0$ for each $u\in X$,
we say that the Galerkin approximation $P_nA|_{X_n}\in L(X_n)$ is $T$-compatible, if
there exists a sequence $(T_n)_{n\in\setN}, T_n\in L(X_n)$ so that
\begin{align}\label{eq:discretenorm}
\|T-T_n\|_n:=\sup_{u_n\in X_n\setminus\{0\}} \|(T-T_n)u_n\|_X /\|u_n\|_X
\end{align}
tends to zero as $n\to\infty$.
Let $A(\cdot)\colon \Lambda\to L(X)$ be weakly $T(\cdot)$-coercive. We say that the Galerkin approximation
$P_nA(\cdot)|_{X_n}\colon\Lambda\to L(X_n)$ is $T(\cdot)$-compatible, if $P_nA(\lambda)|_{X_n}\in L(X_n)$ is
$T(\lambda)$-compatible for each $\lambda\in\Lambda$.

\subsection{Lebesgue and Sobolev spaces}
Let $\Omega\subset\setR^3$ be a bounded path connected Lipschitz domain and $\nv$ the outer unit
normal vector at $\partial\Omega$. We use standard notation for Lebesgue and
Sobolev spaces $L^2(\Omega)$, $L^\infty(\Omega)$, $W^{1,\infty}(\Omega)$, $H^s(\Omega)$ defined on the
domain $\Omega$ and $L^2(\partial\Omega)$, $H^s(\partial\Omega)$ defined on the
boundary $\partial\Omega$. We recall the continuity of the trace operator
$\tr \in L\big(H^s(\Omega), H^{s-1/2}\big)$ for all $s>1/2$.
For a vector space $X$ of scalar valued functions we
denote its bold symbol as space of three-vector valued functions
$\boldX:=X^3=X\times X\times X$, e.g.\ $\boldL^2(\Omega)$, $\boldH^s(\Omega)$,
$\boldL^2(\partial\Omega)$, $\boldH^s(\partial\Omega)$. For $\boldL^2(\partial\Omega)$
or a subspace, e.g.\ $\boldH^s(\partial\Omega), s>0$, the subscript $t$ denotes
the subspace of tangential fields. In particular $\boldL^2_t(\partial\Omega)=
\{u\in \boldL^2(\partial\Omega)\colon \nv\cdot u=0\}$ and $\boldH^s_t(\partial\Omega)=
\{u\in \boldH^s(\partial\Omega)\colon \nv\cdot u=0\}$. Let further $H^1_0(\Omega)$
be the subspace of $H^1(\Omega)$ of all functions with vanishing Dirichlet trace,
$H^1_*(\Omega)$ be the subspace of $H^1(\Omega)$ of all functions with vanishing mean,
i.e.\ $\spl u,1\spr_{L^2(\Omega)}=0$ and $H^1_*(\partial\Omega)$ be the subspace of
$H^1(\partial\Omega)$ of all functions with vanishing mean $\spl u,1 \spr_{L^2(\partial\Omega)}=0$.

\subsection{Additional function spaces}
Denote $\partial_{x_i} u$ the partial derivative of a function $u$ with respect
to the variable $x_i$. Let
\begin{align*}
\nabla u&:=(\partial_{x_1}u,\partial_{x_2}u,\partial_{x_3}u)^\top,\\
\div (u_1,u_2,u_3)^\top&:=\partial_{x_1}u_1+\partial_{x_2}u_2+\partial_{x_3}u_3,\\
\curl (u_1,u_2,u_3)^\top&:=(-\partial_{x_2}u_1+\partial_{x_1}u_3,
\partial_{x_3}u_1-\partial_{x_1}u_3,-\partial_{x_2}u_1+\partial_{x_1}u_2)^\top.
\end{align*}
For a bounded Lipschitz domain $\Omega$ let $\nabla_\partial, \div_\partial$ and $\curl_\partial=\nu\times\nabla_\partial$
be the respective differential operators for functions defined on $\partial\Omega$.
We recall that for $u\in\boldL^2(\Omega)$ with $\curl u\in\boldL^2(\Omega)$ the
tangential trace $\tr_{\nv\times}u\in \boldH^{-1/2}(\div_\partial;\partial\Omega):=\{u \in
\boldH^{-1/2}(\partial\Omega)\colon \div_\partial u\in H^{-1/2}(\partial\Omega)\}$,
$\|u\|^2_{\boldH^{-1/2}(\div_\partial;\partial\Omega)}:=\|u\|^2_{\boldH^{-1/2}(\partial\Omega)}
+\|\div_\partial u\|^2_{H^{-1/2}(\partial\Omega)}$ is well defined and
$\|\tr_{\nv\times}u\|_{\boldH^{-1/2}(\div_\partial;\partial\Omega)}^2$
is bounded by a constant times $\|u\|^2_{\boldL^2(\Omega)}+\|\curl u\|^2_{\boldL^2(\Omega)}$.
Likewise for $u\in\boldL^2(\Omega)$ with $\div u\in L^2(\Omega)$
the normal trace $\tr_{\nv\cdot}u\in H^{-1/2}(\partial\Omega)$ is well defined
and $\|\tr_{\nv\cdot}u\|_{H^{-1/2}(\partial\Omega)}^2$ is
bounded by a constant times $\|u\|_{\boldL^2(\Omega)}^2+\|\div u\|_{L^2(\Omega)}^2$.
For $\dd\in\{\curl,\div,\tr_{\nv\times},\tr_{\nv\cdot}\}$ let
\begin{subequations}
\begin{align}
L^2(\dd):=\left\{\begin{array}{ll}
\boldL^2(\Omega),&\dd=\curl,\\
L^2(\Omega),&\dd=\div,\\
\boldL^2_t(\partial\Omega),&\dd=\tr_{\nv\times},\\
L^2(\partial\Omega),&\dd=\tr_{\nv\cdot}
\end{array}\right..
\end{align}
Let
\begin{align}
\begin{aligned}
H(\dd;\Omega)&:=\{u\in L^2(\Omega)\colon \dd u\in L^2(\dd)\},\\
\spl u,u'\spr_{H(\dd;\Omega)}&:=\spl u,u'\spr_{L^2(\Omega)}
+\spl \dd u,\dd u'\spr_{L^2(\dd)},
\end{aligned}
\end{align}
\begin{align}
H(\dd^0;\Omega)&:=\{u\in H(\dd;\Omega)\colon \dd u=0\}.
\end{align}
Also for
\begin{align*}
\dd_1,\dd_2,\dd_3,\dd_4\in
\{&\curl,\div,\tr_{\nv\times},\tr_{\nv\cdot},\curl^0,\div^0,\tr_{\nv\times}^0,\tr_{\nv\cdot}^0\}
\end{align*}
let
\begin{align}
\begin{aligned}
H(\dd_1,\dd_2;\Omega)&:=H(\dd_1;\Omega)\cap H(\dd_2;\Omega),\\
\spl u,u'\spr_{H(\dd_1,\dd_2;\Omega)}&:=\spl u,u'\spr_{L^2(\Omega)}
+\spl \dd_1 u,\dd_1 u'\spr_{L^2(\dd_1)}+\spl \dd_2 u,\dd_2 u'\spr_{L^2(\dd_2)},
\end{aligned}
\end{align}
\begin{align}
\begin{aligned}
H(\dd_1,\dd_2,\dd_3;\Omega)&:=H(\dd_1;\Omega)\cap H(\dd_2;\Omega)\cap H(\dd_3;\Omega),\\
\spl u,u'\spr_{H(\dd_1,\dd_2,\dd_3;\Omega)}&:=\spl u,u'\spr_{L^2(\Omega)}
+\spl \dd_1 u,\dd_1 u'\spr_{L^2(\dd_1)}+\spl \dd_2 u,\dd_2 u'\spr_{L^2(\dd_2)}\\
&+\spl \dd_3 u,\dd_3 u'\spr_{L^2(\dd_3)},
\end{aligned}
\end{align}
and
\begin{align}
\begin{aligned}
H(\dd_1,\dd_2,\dd_3,\dd_4;\Omega)&:=H(\dd_1;\Omega)\cap H(\dd_2;\Omega)
\cap H(\dd_3;\Omega)\cap H(\dd_4;\Omega),\\
\spl u,u'\spr_{H(\dd_1,\dd_2,\dd_3,\dd_4;\Omega)}&:=\spl u,u'\spr_{L^2(\Omega)}
+\spl \dd_1 u,\dd_1 u'\spr_{L^2(\dd_1)}+\spl \dd_2 u,\dd_2 u'\spr_{L^2(\dd_2)}\\
&+\spl \dd_3 u,\dd_3 u'\spr_{L^2(\dd_3)}+\spl \dd_4 u,\dd_4 u'\spr_{L^2(\dd_4)}.
\end{aligned}
\end{align}
\end{subequations}

\subsection{Assumptions on the domain and material parameters}
\begin{assumption}[Assumption on $\epsilon$]\label{ass:eps}
Let $\epsilon\in \big(L^\infty(\Omega)\big)^{3x3}$ be so that there exist $c_\epsilon>0$ with
\begin{align}
c_\epsilon |\xi|^2 &\leq \Re (\xi^H \epsilon(x) \xi) \quad\text{and}\quad 0 \leq \Im (\xi^H \epsilon(x) \xi)
\end{align}
for all $x\in\Omega$ and all $\xi\in\setC^3$.
\end{assumption}

\begin{assumption}[Assumption on $\mu$]\label{ass:mu}
Let $\mu^{-1}\in \big(L^\infty(\Omega)\big)^{3x3}$ be so that there exist $c_\mu>0$ with
\begin{align}
c_\mu |\xi|^2 &\leq \Re (\xi^H \mu^{-1}(x) \xi) \quad\text{and}\quad
0 \leq -\Im (\xi^H \mu^{-1}(x) \xi)
\end{align}
for all $x\in\Omega$ and all $\xi\in\setC^3$.
\end{assumption}

\begin{assumption}[Assumption on $\Omega$]\label{ass:Domain}
Let $\Omega\subset\setR^3$ be a bounded path connected Lipschitz domain so that there
exists $\delta>0$ and the following shift theorem holds on $\Omega$:
Let $f\in L^2(\Omega)$, $g\in H^{1/2}(\partial\Omega)$ with
$\spl g,1\spr_{L^2(\partial\Omega)}=0$ and $w\in H^1_*(\Omega)$ be the solution to
\begin{subequations}
\begin{align}
-\Delta w &= f\quad\text{ in }\Omega,\\
n\cdot\nabla w&= g\quad\text{ at }\partial\Omega.
\end{align}
\end{subequations}
Then the linear map $(f,g)\mapsto w\colon L^2(\Omega)\times
H^{1/2}(\partial\Omega)\to H^{3/2+\delta}(\Omega)$ is well defined and continuous.
\end{assumption}
The above assumption holds e.g.\ for smooth domains and Lipschitz polyhedral~\cite[Corollary~23.5]{Dauge:88}.

\begin{assumption}[Assumption on $\Omega, \epsilon$ and $\mu^{-1}$]\label{ass:UCP}
Let $\epsilon, \mu^{-1}$ and $\Omega$ be so that a unique continuation principle holds, i.e.\ if $u\in H(\curl;\Omega)$
solves
\begin{subequations}
\begin{align}
\curl\mu^{-1}\curl u -\omega^2\epsilon u&=0\quad\text{in }\Omega,\\
\tr_{\nv\times} u&=0\quad\text{at }\partial\Omega,\\
\tr_{\nv\times} \mu^{-1}\curl u&=0\quad\text{at }\partial\Omega,
\end{align}
\end{subequations}
then $u=0$.
\end{assumption}
To our knowledge the most general todays available result on the unique
continuation principle for Maxwells equations is the one of Ball, Capdeboscq and
Tsering-Xiao~\cite{BallCapdeboscqTsering-Xiao:12}. It essentially requires $\epsilon$ and $\mu^{-1}$ to be piece-wise
$W^{1,\infty}$.

\subsection{Trace regularities and compact embeddings}
We recall some classical results on traces and embeddings, which will be essential for our analysis.
We recall from Costabel~\cite{Costabel:90}:
\begin{subequations}\label{eq:CostabelTrace}
\begin{align}
\tr_{\nv\cdot}\in  L\big(H(\curl,\div,\tr_{\nv\times};\Omega), L^2(\partial\Omega)\big),\\
\tr_{\nv\times}\in L\big(H(\curl,\div,\tr_{\nv\cdot};\Omega), \boldL^2_t(\partial\Omega)\big).
\end{align}
\end{subequations}
and
\begin{align}\label{eq:CostabelDomain}
\begin{aligned}
&\text{The embedding from }H(\curl,\div,\tr_{\nv\times};\Omega)\text{ to } \boldL^2(\Omega)\text{ is compact}.
\end{aligned}
\end{align}
We deduce from Amrouche, Bernardi, Dauge and Girault~\cite[Proposition~3.7]{AmroucheBernardiDaugeGirault:98}:
\begin{align}\label{eq:Vtraceregularity}
\begin{aligned}
&\text{If } \Omega \text{ suffices Assumption~\ref{ass:Domain}, then}
\tr_{\nv\times} \in L\big( H(\curl,\div,\tr_{\nv\cdot}^0;\Omega),\boldL^2_t(\partial\Omega) \big)\\
&\text{is compact}.
\end{aligned}
\end{align}

\subsection{Helmholtz decomposition on the boundary}
We recall from Buffa, Costabel and Sheen~\cite[Theorem 5.5]{BuffaCostabelSheen:02}:
\begin{align}
\boldL^2_t(\partial\Omega)=\nabla_\partial H^1(\partial\Omega) \oplus^\bot
\curl_\partial H^1(\partial\Omega).
\end{align}
and denote the respective orthogonal projections by
\begin{align}
P_{\nabla_\partial}\colon \boldL^2_t(\partial\Omega) \to \nabla_\partial H^1(\partial\Omega),\qquad
P_{\nabla_\partial^\top}\colon \boldL^2_t(\partial\Omega) \to \curl_\partial H^1(\partial\Omega).
\end{align}
Recall $\div_\partial \tr_{\nv\times} \in L\big(H(\curl;\Omega), H^{-1/2}(\partial\Omega)\big)$.
So for $u\in H(\curl;\Omega)$ let $z$ be the solution to find $z\in H^1_*(\partial\Omega)$ so that
\begin{align}
\spl \nabla_\partial z, \nabla_\partial z' \spr_{\boldL_t^2(\partial\Omega)}
= -\spl \div_\partial \tr_{\nv\times} u, z' \spr_{H^{-1}(\partial\Omega)\times H^1(\partial\Omega)}
\end{align}
for all $z'\in H^1_*(\partial\Omega)$ and set
\begin{align}\label{eq:DefS}
Su:=\nabla_\partial z.
\end{align}
From the construction of $S$ it follows $S\in L\big(H(\curl;\Omega),\boldL_t^2(\partial\Omega)\big)$ and further
\begin{align}
Su=P_{\nabla_\partial}\tr_{\nv\times} u
\end{align}
for $u\in H(\curl,\tr_{\nv\times};\Omega)$.

\section{Weak $T(\cdot)$-coercivity of the Stekloff operator function}\label{sec:AFredholm}
First we introduce the electromagnetic Stekloff eigenvalue problem as holomorphic operator function eigenvalue problem.
In Theorem~\ref{thm:VW} we report an apt decomposition of the respective Hilbert space into three subspaces.
Next we introduce in~\eqref{eq:T} an operator function $T(\cdot)$ as an apt sign change on the subspaces.
In Theorem~\ref{thm:AwTc} we report the weak $T(\cdot)$-coercivity of the Stekloff operator function on
$\setC\setminus\{0\}$. In Corollary~\ref{cor:ressetnonempty} we deduce convenient properties of the spectrum in
$\setC\setminus\{0\}$. In Corollary~\ref{cor:AzNotFred} we report that $\lambda=0$ constitutes the essential spectrum.
However, these two Corollaries make no statement on the existence of eigenvalues. We report in a companion article
\cite{Halla:19StekloffExist}
the existence and behavior of eigenvalues for purely real, symmetric $\mu$ and $\epsilon$, i.e.\ in the selfadjoint case.\\

Let $\omega>0$ be fixed. For $\lambda\in\setC$ let $A(\lambda)\in L\big(H(\curl,\tr_{\nv\times};\Omega)\big)$ be defined
through
\begin{align}
\begin{aligned}
\spl A(\lambda)u,u'&\spr_{H(\curl,\tr_{\nv\times};\Omega)}:=
\spl \mu^{-1} \curl u,\curl u'\spr_{\boldL^2(\Omega)}
-\omega^2\spl\epsilon u,u'\spr_{\boldL^2(\Omega)}\\
&-\lambda \spl \tr_{\nv\times} u,\tr_{\nv\times} u'\spr_{\boldL^2_t(\partial\Omega)}
\quad\text{for all }u,u'\in H(\curl,\tr_{\nv\times};\Omega).
\end{aligned}
\end{align}
The electromagnetic Stekloff eigenvalue problem which we investigate in this section is to
\begin{align}\label{eq:EVP}
\text{find}\quad (\lambda,u)\in \setC\times H(\curl,\tr_{\nv\times};\Omega)
\setminus\{0\}\quad\text{so that}\quad A(\lambda)u=0.
\end{align}
We note that the sign of $\lambda$ herein is reversed compared to~\cite{CamanoLacknerMonk:17}.
To analyze the operator $A(\lambda)$ we introduce the following subspaces of $H(\curl,\tr_{\nv\times};\Omega)$:
\begin{subequations}
\begin{align}
V&:=H(\curl,\div^0,\tr_{\nv\times},\tr_{\nv\cdot}^0;\Omega),\\
W_1&:=H(\curl^0,\div^0,\tr_{\nv\times};\Omega),\\
W_2&:=\nabla H^1_0 \subset H(\curl^0,\tr_{\nv\times}^0;\Omega).
\end{align}
\end{subequations}
\begin{theorem}\label{thm:VW}
It holds
\begin{align}
H(\curl,\tr_{\nv\times};\Omega)=(V\oplus W_1)\oplus^{\bot_{H(\curl,\tr_{\nv\times};\Omega)}} W_2
\end{align}
in the following sense. There exist projections $P_V, P_{W_1}, P_{W_2} \in L\big(H(\curl,\tr_{\nv\times};\Omega)\big)$
with $\ran P_V=V, \ran P_{W_1}=W_1, \ran P_{W_2}=W_2$,
$W_1, W_2 \subset \ker P_V$, $V, W_2 \subset \ker P_{W_1}$, $V, W_1 \subset \ker P_{W_2}$
and $u=P_vu+P_{W_1}u+P_{W_2}u$ for each $u\in H(\curl,\tr_{\nv\times};\Omega)$. Thus, the norm induced by
\begin{align}\label{eq:scptildeX}
\begin{aligned}
\spl u,u'\spr_X&:=\spl P_Vu,P_Vu'\spr_{H(\curl,\tr_{\nv\times};\Omega)}
+\spl P_{W_1}u,P_{W_1}u'\spr_{H(\curl,\tr_{\nv\times};\Omega)}\\
&+\spl P_{W_2}u,P_{W_2}u'\spr_{H(\curl,\tr_{\nv\times};\Omega)},
\quad u,u'\in H(\curl,\tr_{\nv\times};\Omega)
\end{aligned}
\end{align}
is equivalent to $\|\cdot\|_{H(\curl,\tr_{\nv\times};\Omega)}$.
\end{theorem}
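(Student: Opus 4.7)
The plan is to build the three projections in two stages: first the $H(\curl,\tr_{\nv\times};\Omega)$-orthogonal projection onto $W_2$, then a (non-orthogonal) splitting of its complement into $V$ and $W_1$ via a scalar Neumann potential. For $P_{W_2}$, observe that every $\nabla\phi\in W_2=\nabla H^1_0(\Omega)$ has vanishing curl and vanishing tangential trace, so the $H(\curl,\tr_{\nv\times};\Omega)$-inner product of $u$ with $\nabla\phi$ reduces to $\spl u,\nabla\phi\spr_{\boldL^2(\Omega)}$. I would therefore define $P_{W_2}u:=\nabla\phi$, where $\phi\in H^1_0(\Omega)$ is the unique solution (Riesz/Poincar\'e) of
\[
\spl\nabla\phi,\nabla\phi'\spr_{\boldL^2(\Omega)}=\spl u,\nabla\phi'\spr_{\boldL^2(\Omega)}\quad\forall\phi'\in H^1_0(\Omega),
\]
giving a bounded projection with range $W_2$; by construction $\tilde u:=u-P_{W_2}u$ satisfies $\div\tilde u=0$ distributionally in $\Omega$.

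For the splitting of $\tilde u\in H(\curl,\div^0,\tr_{\nv\times};\Omega)$, by~\eqref{eq:CostabelTrace} its normal trace lies in $L^2(\partial\Omega)$ and has zero mean (from $\div\tilde u=0$). I would solve the variational Neumann problem for $\psi\in H^1_*(\Omega)$ with data $\tr_{\nv\cdot}\tilde u$, and then set $w_1:=\nabla\psi$ and $v:=\tilde u-w_1$. By construction $\curl w_1=0$, $\div w_1=0$, $\div v=0$, and $\tr_{\nv\cdot}v=0$, so $v\in H(\curl,\div^0,\tr_{\nv\cdot}^0;\Omega)$, which by~\eqref{eq:CostabelTrace} gives $\tr_{\nv\times}v\in\boldL^2_t(\partial\Omega)$, i.e.\ $v\in V$. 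Consequently $\tr_{\nv\times}w_1=\tr_{\nv\times}\tilde u-\tr_{\nv\times}v\in\boldL^2_t(\partial\Omega)$, so $w_1\in W_1$, yielding $V+W_1=H(\curl,\div^0,\tr_{\nv\times};\Omega)$. The $H(\curl,\tr_{\nv\times};\Omega)$-orthogonality of $V+W_1$ against $W_2$ is immediate: for $v+w_1$ in the sum and any $\nabla\phi\in W_2$, the inner product collapses to $\spl v+w_1,\nabla\phi\spr_{\boldL^2(\Omega)}$, which vanishes by integration by parts using $\phi|_{\partial\Omega}=0$ and $\div(v+w_1)=0$. Boundedness of $P_V$ and $P_{W_1}$ comes from the standard Neumann a priori bound $\|\nabla\psi\|_{\boldL^2}\le C\|\tr_{\nv\cdot}\tilde u\|_{H^{-1/2}(\partial\Omega)}\le C(\|\tilde u\|_{\boldL^2}+\|\div\tilde u\|_{L^2})$ combined with the trace continuity of~\eqref{eq:CostabelTrace}, and the norm equivalence claimed for~\eqref{eq:scptildeX} then follows from uniform boundedness of the three projections together with the trivial direction.

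The step I expect to be most delicate is ensuring $w_1\in W_1$: its tangential trace is required in $\boldL^2_t(\partial\Omega)$, but since the Neumann data $\tr_{\nv\cdot}\tilde u$ sits only in $L^2(\partial\Omega)$ (and not $H^{1/2}$), Assumption~\ref{ass:Domain} does not directly supply the regularity $\psi\in H^{3/2+\delta}(\Omega)$ that would yield a direct control of $\nabla_\partial(\tr\psi)$. The indirect route of defining $v\in V$ first via~\eqref{eq:CostabelTrace} and deducing $\tr_{\nv\times}w_1$ by subtraction sidesteps this obstruction. A second subtle point, needed for the direct-sum structure and thus for the well-definedness of $P_V$ and $P_{W_1}$, is the triviality of $V\cap W_1$ (the space of Neumann harmonic fields), which implicitly constrains the topology of $\Omega$.
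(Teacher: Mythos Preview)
Your proposal is correct and matches the paper's argument step for step: orthogonal projection onto $W_2$, then a harmonic Neumann potential $\psi$ to peel off $W_1$ from the divergence-free remainder, with the norm equivalence following from boundedness of the three projections. The one place you diverge is the step you call most delicate: the paper applies~\eqref{eq:CostabelTrace} directly to $w_1=\nabla\psi$ rather than to $v$ --- since $\nabla\psi$ has $\curl=\div=0$ and, by construction, $\tr_{\nv\cdot}\nabla\psi=\tr_{\nv\cdot}\tilde u\in L^2(\partial\Omega)$, the second line of~\eqref{eq:CostabelTrace} gives $\tr_{\nv\times}\nabla\psi\in\boldL^2_t(\partial\Omega)$ immediately, so your subtraction detour through $v\in V$ is not needed (though it is also valid). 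Your closing remark on $V\cap W_1$ is well taken: the paper's Step~2a tacitly uses that every $u\in W_1$ coincides with the gradient of its Neumann potential, which is precisely the triviality of the Neumann harmonic fields and is not made explicit in the stated hypotheses on $\Omega$.
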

\begin{proof}
\textit{1. Step:}\quad
Let $P_{W_2}$ be the $H(\curl,\tr_{\nv\times};\Omega)$-orthogonal projection onto $W_2$.
Hence $P_{W_2}\in L\big(H(\curl,\tr_{\nv\times};\Omega)\big)$ is a projection with range $W_2$ and kernel
\begin{align*}
W_2^{\bot_{H(\curl,\tr_{\nv\times};\Omega)}}=H(\curl,\div^0,\tr_{\nv\times};\Omega)\supset V, W_1.
\end{align*}

\textit{2a. Step:}\quad
Let $u\in H(\curl,\tr_{\nv\times};\Omega)$. Note that due to $\div (u-P_{W_2}u)=0$ and \eqref{eq:CostabelTrace} it hold
$\tr_{\nv\cdot}(u-P_{W_2}u) \in L^2(\partial\Omega)$ and
$\spl \tr_{\nv\cdot}(u-P_{W_2}u), 1\spr_{L^2(\partial\Omega)}=0$.
Let $w_*\in H^1_*(\Omega)$ be the unique solution to
\begin{align*}
-\Delta w_*=0 \quad\text{in }\Omega, \qquad \nv\cdot \nabla w_*= \tr_{\nv\cdot}(u-P_{W_2}u)
\quad\text{at }\partial\Omega.
\end{align*}
Let $P_{W_1}u:=\nabla w_*$. By construction of $P_{W_1}$ and due to \eqref{eq:CostabelTrace} it hold
$\ran P_{W_1}\subset W_1$ and $P_{W_1}\in L\big(H(\curl,\tr_{\nv\times};\Omega)\big)$.
Let $u\in W_1$. Then $P_{W_2}u=0$ and hence $P_{W_1}u=u$. Thus $P_{W_1}$ is a projection and $\ran P_{W_1}=W_1$.

\textit{2b. Step:}\quad
If $u\in W_2$ then $u-P_{W_2}u=0$, further $\tr_{\nv\cdot}(u-P_{W_2}u)=0$ and thus $P_{W_1}u=0$.
Hence $W_2\subset \ker P_{W_1}$.
If $u\in V$ then $P_{W_2}u=0$, further $\tr_{\nv\cdot}(u-P_{W_2}u)=\tr_{\nv\cdot}u=0$ and thus $P_{W_1}u=0$.
Hence $V\subset \ker P_{W_1}$.

\textit{3. Step:}\quad
Let $u\in H(\curl,\tr_{\nv\times};\Omega)$ and $P_Vu:=u-P_{W_1}u-P_{W_2}u$. It follow
$P_V\in L\big(H(\curl,\tr_{\nv\times};\Omega)\big)$, $P_Vu\in V$ and $P_VP_Vu=P_Vu$, i.e.\ $P_V$ is a bounded projection.
If $u\in V$ then $P_{W_1}u=P_{W_2}u=0$ and thus $P_Vu=u$. Hence $\ran P_V=V$. It follow further $W_1,W_2 \subset \ker P_V$.

\textit{4. Step:}\quad
By means of the triangle
inequality and a Young inequality it holds.
\begin{align*}
\|u\|&_{{H(\curl,\tr_{\nv\times};\Omega)}}^2
=\|P_Vu+P_{W_1}u+P_{W_2}u\|_{{H(\curl,\tr_{\nv\times};\Omega)}}^2\\
&\leq 3\big(\|P_Vu\|_{{H(\curl,\tr_{\nv\times};\Omega)}}^2
+\|P_{W_1}u\|_{{H(\curl,\tr_{\nv\times};\Omega)}}^2
+\|P_{W_2}u\|_{{H(\curl,\tr_{\nv\times};\Omega)}}^2\big)\\
&=3\|u\|_X^2.
\end{align*}
On the other hand due to the boundedness of the projections
\begin{align*}
\|u\|_X^2
&=\|P_Vu\|_{{H(\curl,\tr_{\nv\times};\Omega)}}^2
+\|P_{W_1}u\|_{{H(\curl,\tr_{\nv\times};\Omega)}}^2
+\|P_{W_2}u\|_{{H(\curl,\tr_{\nv\times};\Omega)}}^2\\
&\leq \big(\|P_V\|_{L(H(\curl,\tr_{\nv\times};\Omega))}^2
+\|P_{W_1}\|_{L(H(\curl,\tr_{\nv\times};\Omega))}^2\\
&+\|P_{W_2}\|_{L(H(\curl,\tr_{\nv\times};\Omega))}^2\big)
\|u\|_{{H(\curl,\tr_{\nv\times};\Omega)}}^2.
\end{align*}
\end{proof}

Let us look at $A(\lambda)$ in light of this substructure of ${H(\curl,\tr_{\nv\times};\Omega)}$. To this end we
consider the space
\begin{align}\label{eq:X}
X:=H(\curl,\tr_{\nv\times};\Omega), \qquad
\spl\cdot,\cdot\spr_X\quad\text{as defined in~\eqref{eq:scptildeX}}.
\end{align}
It follows that $P_V, P_{W_1}$ and $P_{W_1}$ are even orthogonal projections in
$X$. Let further $A_X(\cdot), A_c, A_\epsilon, A_{l^2}, A_{\tr} \in
L(X)$ be defined through
\begin{subequations}
\begin{align}
\label{eq:DefAX}
\spl A_X(\lambda) u,u'\spr_X&:=\spl A(\lambda)u,u'\spr_{H(\curl,\tr_{\nv\times};\Omega)}
\quad\text{for all }u,u'\in X,\lambda\in\setC\\
\spl A_c u,u'\spr_X&:=\spl \mu^{-1}\curl u,\curl u'\spr_{\boldL^2(\Omega)}
\quad\text{for all }u,u'\in X,\\
\spl A_\epsilon u,u'\spr_X&:=\spl \epsilon u,u'\spr_{\boldL^2(\Omega)}
\quad\text{for all }u,u'\in X,\\
\spl A_{l^2} u,u'\spr_X&:=\spl u,u'\spr_{\boldL^2(\Omega)}
\quad\text{for all }u,u'\in X,\\
\spl A_{\tr} u,u'\spr_X&:=\spl \tr_{\nv\times}u,\tr_{\nv\times}u'\spr_{\boldL^2_t(\partial\Omega)}
\quad\text{for all }u,u'\in X.
\end{align}
\end{subequations}
We deduce from the definitions of $V, W_1$ and $W_2$ that
\begin{align}\label{eq:AVWW}
\begin{aligned}
A_X(\lambda)&=
(P_V+P_{W_1}+P_{W_2})(A_c-\omega^2 A_\epsilon-\lambda A_{\tr})(P_V+P_{W_1}+P_{W_2})\\
&=P_VA_cP_V
-\omega^2 (P_V+P_{W_1}+P_{W_2}) A_\epsilon (P_V+P_{W_1}+P_{W_2})\\
&-\lambda (P_V+P_{W_1})A_{\tr} (P_V+P_{W_1})\\
&={\color{blue}P_VA_cP_V -\omega^2 P_{W_2}A_\epsilon P_{W_2} -\lambda P_{W_1}A_{\tr} P_{W_1}}\\
&-\omega^2 (P_VA_\epsilon P_V +P_{W_1}A_\epsilon P_{W_1})\\
&-\lambda \big(P_VA_{\tr} P_V +P_VA_{\tr} P_{W_1} +P_{W_1}A_{\tr} P_V).
\end{aligned}
\end{align}
If we identify $X\sim V\times W_1\times W_2$ and
$X \ni u\sim (v,w_1,w_2)\in V\times W_1\times W_2$,
we can identify $A_X(\lambda)$ with the block operator
\begin{align}\label{eq:Ablock}
\bpm
{\color{blue}P_VA_c|_V} -P_V(\omega^2 A_\epsilon +\lambda A_{\tr}) |_V & -P_V (\omega^2 A_\epsilon+ \lambda A_{\tr}) |_{W_1}& -\omega^2 P_V A_\epsilon |_{W_2}\\
- P_{W_1} (\omega^2 A_\epsilon+\lambda A_{\tr}) |_V&-\omega^2 P_{W_1} A_\epsilon|_{W_1} {\color{blue}-\lambda P_{W_1} A_{\tr}|_{W_1}}& -\omega^2 P_{W_1} A_\epsilon |_{W_2}\\
-\omega^2 P_{W_2} A_\epsilon |_V & -\omega^2 P_{W_2} A_\epsilon |_{W_1} &{\color{blue}-\omega^2 P_{W_2} A_\epsilon |_{W_2}}
\epm.
\end{align}
We color highlighted in \eqref{eq:AVWW} and \eqref{eq:Ablock} the operators which are not compact. This leads us to
define a test function operator function in the following way. Let
\begin{align}\label{eq:T}
T(\lambda)&:=P_V-\ol{\lambda}^{-1}P_{W_1}-\omega^{-2}P_{W_2}, \quad\lambda\in\setC\setminus\{0\}.
\end{align}
Obviously $T(\lambda)\in L(X)$ is bijective with $T(\lambda)^{-1}=P_V-\ol{\lambda} P_{W_1}-\omega^2 P_{W_2}$ for
$\lambda\in\setC\setminus\{0\}$.

\begin{theorem}\label{thm:AwTc}
Let $\epsilon$ suffice Assumption~\ref{ass:eps}, $\mu$ suffice Assumption~\ref{ass:mu}
and $\Omega$ suffice Assumption~\ref{ass:Domain}. Thence $A_X(\cdot)\colon \setC\setminus\{0\}\to L(X)$
is weakly $T(\cdot)$-coercive.
\end{theorem}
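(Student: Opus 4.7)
The proof strategy is to exploit the block decomposition of $A_X(\lambda)$ displayed in \eqref{eq:AVWW}--\eqref{eq:Ablock}: modulo compact summands, only the three diagonal blocks highlighted in blue survive, and the sign/scale change encoded in $T(\lambda)$ is tailored so that $T(\lambda)^*$ cancels their prefactors $-\lambda$ and $-\omega^2$, leaving a coercive principal part.

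First I would establish the decomposition $A_X(\lambda)=P_VA_cP_V-\omega^2P_{W_2}A_\epsilon P_{W_2}-\lambda P_{W_1}A_{\tr}P_{W_1}+K(\lambda)$ with $K(\lambda)\in L(X)$ compact. The off-diagonal $A_c$-entries vanish because $\curl$ annihilates $W_1\oplus W_2$, and $A_{\tr}P_{W_2}=P_{W_2}A_{\tr}=0$ because the tangential trace kills $\nabla H^1_0$. The remaining entries $P_iA_\epsilon P_j$ with $(i,j)\neq(W_2,W_2)$ are compact since they factor through the compact embeddings $V,W_1\hookrightarrow\boldL^2(\Omega)$ supplied by~\eqref{eq:CostabelDomain}; the remaining $A_{\tr}$-entries $P_VA_{\tr}P_V$, $P_VA_{\tr}P_{W_1}$, $P_{W_1}A_{\tr}P_V$ are compact because they factor through the compact tangential trace $\tr_{\nv\times}|_V\colon V\to\boldL^2_t(\partial\Omega)$ from~\eqref{eq:Vtraceregularity}, which applies since $V\subset H(\curl,\div,\tr_{\nv\cdot}^0;\Omega)$; the $V$--$W_1$ crosses are handled by taking adjoints in $X$, since $\tr_{\nv\times}|_{W_1}$ is only bounded.

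Using $P_i^*=P_i$ and $P_iP_j=\delta_{ij}P_i$ in $X$, together with $T(\lambda)^*=P_V-\lambda^{-1}P_{W_1}-\omega^{-2}P_{W_2}$, a direct multiplication yields
\begin{align*}
T(\lambda)^*A_X(\lambda) \;=\; P_VA_cP_V + P_{W_1}A_{\tr}P_{W_1} + P_{W_2}A_\epsilon P_{W_2} + T(\lambda)^*K(\lambda).
\end{align*}
Evaluating this sesquilinear form on $u=v+w_1+w_2\in V\oplus W_1\oplus W_2$ and invoking Assumptions~\ref{ass:eps},~\ref{ass:mu}, the real part of the principal (non-compact) part is bounded below by
\begin{align*}
c_\mu\|\curl v\|_{\boldL^2(\Omega)}^2 + \|\tr_{\nv\times}w_1\|_{\boldL^2_t(\partial\Omega)}^2 + c_\epsilon\|w_2\|_{\boldL^2(\Omega)}^2.
\end{align*}
The missing ingredients of $\|u\|_X^2$, namely $\|v\|_{\boldL^2(\Omega)}^2$, $\|\tr_{\nv\times}v\|_{\boldL^2_t(\partial\Omega)}^2$ and $\|w_1\|_{\boldL^2(\Omega)}^2$, are each quadratic forms of compact operators on $X$ (by the same embeddings cited above), so after absorbing them into an additional compact summand $K_{\text{fix}}$ the resulting lower bound becomes $\gtrsim\|u\|_X^2$, proving that $T(\lambda)^*A_X(\lambda)$ is a compact perturbation of a coercive operator.

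The main obstacle I anticipate is the compactness of the mixed $A_{\tr}$-coupling $P_{W_1}A_{\tr}P_V$ and its adjoint, where one factor sits in $W_1$ on which the tangential trace is only continuous; here one must genuinely route the compact-mapping property through the other factor $\tr_{\nv\times}|_V$ supplied by~\eqref{eq:Vtraceregularity}. A side remark consistent with the subsequent Corollary~\ref{cor:AzNotFred}: since $\|T(\lambda)^*\|$ blows up as $\lambda\to 0$, the construction degenerates at the origin, which explains why the theorem is stated only on $\setC\setminus\{0\}$.
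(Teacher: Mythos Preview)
Your argument is correct and follows essentially the same route as the paper's proof. The only cosmetic difference is bookkeeping: the paper builds the compact ``fix'' terms $P_VA_{l^2}P_V$, $P_VA_{\tr}P_V$ and $-\lambda P_{W_1}A_{l^2}P_{W_1}$ directly into its coercive part $A_1$ (so that $\Re\langle A_1u,T(\lambda)u\rangle_X\ge\min(1,c_\epsilon,c_\mu)\|u\|_X^2$ holds in one stroke), whereas you first isolate only the three blue blocks and then add $K_{\text{fix}}$ as a separate compact correction afterward---the net splitting coercive/compact is identical.
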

\begin{proof}
Let $\lambda\in\setC\setminus\{0\}$. Let
\begin{align*}
A_1&:=P_VA_cP_V
+P_V A_{l^2} P_V
+P_V A_{\tr} P_V\\
&-\lambda P_{W_1} A_{l^2} P_{W_1}
-\lambda P_{W_1} A_{\tr} P_{W_1}
-\omega^2 P_{W_2} A_\epsilon P_{W_2}
\end{align*}
and
\begin{align*}
A_2&:=
-\omega^2\big(P_V A_\epsilon P_V
+P_{W_1} A_\epsilon P_{W_1}
+P_V A_\epsilon P_{W_1}
+P_{W_1} A_\epsilon P_V\\
&+P_V A_\epsilon P_{W_2}
+P_{W_2} A_\epsilon P_V
+P_{W_1} A_\epsilon P_{W_2}
+P_{W_2} A_\epsilon P_{W_1}
\big)\\
&-P_V A_{l^2} P_V -(1+\lambda)P_V A_{\tr} P_V \\
&+\lambda P_{W_1} A_{l^2} P_{W_1}-\lambda(P_V A_{\tr} P_{W_1}
+P_{W_1} A_{\tr} P_V\big).
\end{align*}
so that $A_X(\lambda)=A_1+A_2$. Operator $A_2$ is compact due to \eqref{eq:CostabelDomain} and
\eqref{eq:Vtraceregularity} and hence so is $T^*A_2$. It is straight forward to see
\begin{align*}
\Re(\spl A_1u,T(\lambda)u \spr_X) \geq \min(1,c_\epsilon,c_\mu) \|u\|_X^2,
\end{align*}
i.e.\ $T(\lambda)^*A_1$ is coercive.
\end{proof}

We remark that the naming of the (sub)spaces as $X,V,W_1,W_2$ follows
Buffa~\cite{Buffa:05} while the naming of the ``test function operator'' as
$T(\lambda)$ follows e.g.\ Bonnet-Ben Dhia, Ciarlet and Zw\"olf~\cite{BonnetBDCiarletZwoelf:10}.

\begin{corollary}\label{cor:ressetnonempty}
Let Assumptions~\ref{ass:eps}, \ref{ass:mu}, \ref{ass:Domain} and \ref{ass:UCP} hold true.
Then $A_X(\lambda)$ is bijective for all $\lambda\in\setC$ with $\Im(\lambda)<0$.
Hence the spectrum of $A_X(\cdot)$ in $\setC\setminus\{0\}$ consists of an at most countable set of eigenvalues
with finite algebraic multiplicity which have no accumulation point in $\setC\setminus\{0\}$.
\end{corollary}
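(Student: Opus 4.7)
The plan is to combine the Fredholm property of $A_X(\lambda)$ on $\setC\setminus\{0\}$, which is already at hand via Theorem~\ref{thm:AwTc}, with a direct injectivity argument in the open half-plane $\{\Im(\lambda)<0\}$, and then to invoke the analytic Fredholm theorem on the connected set $\setC\setminus\{0\}$.

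For injectivity, suppose $u\in X$ satisfies $A_X(\lambda)u=0$. Testing with $u'=u$ gives
\begin{align*}
\langle \mu^{-1}\curl u,\curl u\rangle_{\boldL^2(\Omega)}-\omega^2\langle\epsilon u,u\rangle_{\boldL^2(\Omega)}=\lambda\|\tr_{\nv\times}u\|^2_{\boldL^2_t(\partial\Omega)},
\end{align*}
and after taking imaginary parts the two interior contributions have, by Assumptions~\ref{ass:eps} and~\ref{ass:mu}, the same definite sign. Combined with $\Im(\lambda)<0$, the resulting sign-constrained identity can only hold if $\tr_{\nv\times}u=0$. Feeding this back, the weak equation reduces to $\langle\mu^{-1}\curl u,\curl u'\rangle_{\boldL^2(\Omega)}=\omega^2\langle\epsilon u,u'\rangle_{\boldL^2(\Omega)}$ for all $u'\in X$. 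Testing with $u'\in \boldC_c^\infty(\Omega)$ yields $\curl(\mu^{-1}\curl u)-\omega^2\epsilon u=0$ in $\Omega$, so $\mu^{-1}\curl u\in H(\curl;\Omega)$. Integration by parts in the remaining relation then gives $\langle \tr_{\nv\times}(\mu^{-1}\curl u),\tr_{\nv\times}u'\rangle_{\partial\Omega}=0$ for all $u'\in X$; varying $\tr_{\nv\times}u'$ over its range yields $\tr_{\nv\times}(\mu^{-1}\curl u)=0$. Assumption~\ref{ass:UCP} now forces $u=0$, so $A_X(\lambda)$ is injective.

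By Theorem~\ref{thm:AwTc} the operator $T(\lambda)^*A_X(\lambda)$ is a compact perturbation of a coercive operator, hence Fredholm of index zero. Since $T(\lambda)$ is bijective the same conclusion holds for $A_X(\lambda)$ itself, for every $\lambda\in\setC\setminus\{0\}$. Together with the injectivity just established, $A_X(\lambda)$ is bijective whenever $\Im(\lambda)<0$, proving the first claim.

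For the second claim, the map $\lambda\mapsto A_X(\lambda)$ is affine in $\lambda$ and hence entire. It is Fredholm of index zero on the connected open set $\setC\setminus\{0\}$, and its resolvent set is nonempty by the preceding paragraph. The analytic Fredholm theorem then yields that $\sigma\big(A_X(\cdot)\big)\cap(\setC\setminus\{0\})$ is at most countable, has no accumulation point in $\setC\setminus\{0\}$, and consists of eigenvalues of finite algebraic multiplicity. The chief technical point is the clean extraction of $\tr_{\nv\times}u=0$ from the imaginary-part identity, which hinges on the signs imposed by Assumptions~\ref{ass:eps} and~\ref{ass:mu}; once injectivity on the half-plane is secured, the remainder is standard holomorphic Fredholm theory.
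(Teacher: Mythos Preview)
Your argument follows the paper's proof essentially verbatim: take the imaginary part of $\langle A_X(\lambda)u,u\rangle=0$, use the sign conditions on $\epsilon$ and $\mu^{-1}$ together with $\Im(\lambda)<0$ to force $\tr_{\nv\times}u=0$, invoke the unique continuation principle for injectivity, and then combine the Fredholm property from Theorem~\ref{thm:AwTc} with the analytic Fredholm theorem. The only difference is that you spell out what the paper compresses into the phrase ``together with Assumption~\ref{ass:UCP} it follows $u=0$'': you explicitly derive $\curl(\mu^{-1}\curl u)-\omega^2\epsilon u=0$ and $\tr_{\nv\times}(\mu^{-1}\curl u)=0$ from the variational identity before appealing to unique continuation, which is a welcome clarification.
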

\begin{proof}
Let $\lambda\in\setC$ with $\Im(\lambda)<0$ and $u \in X$ be so that $A_X(\lambda)u=0$. It follows
\begin{align*}
0=-\Im(\spl A_X(\lambda)u,u)\spr_X)
\geq-\Im(\lambda)\|\tr_{\nv\times}u\|_{\boldL^2_t(\partial\Omega)}^2
\end{align*}
and together with Assumption~\ref{ass:UCP} it follows further $u=0$, i.e.\ $A_X(\lambda)$ is injective.
From Theorem~\ref{thm:AwTc} we know that $A_X(\lambda)$ is Fredholm with index zero for all
$\lambda\in\setC\setminus\{0\}$ and hence $A_X(\lambda)$ is bijective, if $\Im(\lambda)<0$.

Further $A_X(\cdot)$ is holomorphic since it is even an affine function. The resolvent set of
$A_X(\cdot)\colon \setC\setminus\{0\}\to L(X)$ is non-empty.
The result on the spectrum in $\setC\setminus\{0\}$ is a classical result on
holomorphic Fredholm operator functions, see e.g.~\cite[Proposition~A.8.4]{KozlovMazya:99}.
\end{proof}

\begin{corollary}\label{cor:AzNotFred}
Let $\epsilon$ suffice Assumptions~\ref{ass:eps}. Then $A_X(0)$ is not Fredholm.
\end{corollary}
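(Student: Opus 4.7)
The plan is to disprove Fredholmness of $A_X(0)$ by producing a Weyl (singular) sequence for it: a sequence $(\hat u_n)\subset X$ with $\|\hat u_n\|_X=1$, $\hat u_n\rightharpoonup 0$ weakly, and $\|A_X(0)\hat u_n\|_X\to 0$. The guiding observation is that the problematic subspace is $W_1$: its elements are curl-free, so the $X$-norm restricted to $W_1$ reduces to $\|u\|_{\boldL^2(\Omega)}^2+\|\tr_{\nv\times}u\|_{\boldL^2_t(\partial\Omega)}^2$, whereas $\langle A_X(0)u,u'\rangle_X=-\omega^2\langle \epsilon u,u'\rangle_{\boldL^2(\Omega)}$ involves only the interior bulk. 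I aim to manufacture elements of $W_1$ whose $X$-norm is carried almost entirely by the boundary trace, so that the interior mass can be driven to zero.

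Concretely, I would take $L^2(\partial\Omega)$-orthonormal eigenfunctions $\psi_n$ of the Laplace--Beltrami operator on the compact Lipschitz manifold $\partial\Omega$ with eigenvalues $\mu_n\to\infty$, rescale to $\phi_n:=\psi_n/\sqrt{\mu_n}$ so that $\|\nabla_\partial\phi_n\|_{\boldL^2_t(\partial\Omega)}=1$ and $\phi_n\rightharpoonup 0$ in $H^1(\partial\Omega)$, and then set $u_n:=\nabla\tilde\phi_n$, where $\tilde\phi_n\in H^1(\Omega)$ is the harmonic extension of $\phi_n$. Membership $u_n\in W_1$ is immediate from $\curl u_n=0$, $\div u_n=\Delta\tilde\phi_n=0$, and $\tr_{\nv\times}u_n=\nabla_\partial\phi_n\in\boldL^2_t(\partial\Omega)$. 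Combining continuity of the harmonic extension $H^{1/2}(\partial\Omega)\to H^1(\Omega)$ with the Rellich compactness $H^1(\partial\Omega)\hookrightarrow H^{1/2}(\partial\Omega)$ should yield $\|u_n\|_{\boldL^2(\Omega)}\lesssim\|\phi_n\|_{H^{1/2}(\partial\Omega)}\to 0$, whence $\|u_n\|_X^2\to 1$.

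After normalising $\hat u_n:=u_n/\|u_n\|_X$, the properties $\|\hat u_n\|_X=1$ and
\[
\|A_X(0)\hat u_n\|_X\le C\omega^2\|\epsilon\|_{L^\infty(\Omega)}\|\hat u_n\|_{\boldL^2(\Omega)}\to 0
\]
follow directly, and the conclusion $\hat u_n\rightharpoonup 0$ in $X$ hinges on $\phi_n\rightharpoonup 0$ in $H^1(\partial\Omega)$ (yielding weak decay of the traces $\nabla_\partial\phi_n$ in $\boldL^2_t(\partial\Omega)$) together with the strong $\boldL^2(\Omega)$-decay. The main obstacle I anticipate is the bookkeeping with the projection-based inner product~\eqref{eq:scptildeX}: since each $\hat u_n\in W_1$, testing against $u'\in X$ reduces to pairing with $P_{W_1}u'$, and one must verify that both the bulk and boundary-trace components vanish in the limit. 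With the Weyl sequence in hand, Fredholmness of $A_X(0)$ is excluded.
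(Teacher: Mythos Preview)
Your argument is correct and follows the same high-level strategy as the paper: both construct a singular sequence in $W_1$ whose $X$-norm is carried by the tangential boundary trace while the interior $\boldL^2(\Omega)$-mass (the only part $A_X(0)$ sees on $W_1$) vanishes. The concrete constructions differ. The paper chooses a sequence $f_n\in L^2(\partial\Omega)$ that converges in $H^{-1/2}(\partial\Omega)$ but blows up in $L^2(\partial\Omega)$, takes the \emph{Neumann} harmonic extension $\tilde w_{1,n}$ with $\nv\cdot\nabla\tilde w_{1,n}=f_n$, and then invokes Costabel's two-sided trace estimate~\eqref{eq:CostabelTrace} to force $\|\tr_{\nv\times}\nabla\tilde w_{1,n}\|_{\boldL^2_t(\partial\Omega)}\to\infty$ while $\|\nabla\tilde w_{1,n}\|_{\boldL^2(\Omega)}$ stays bounded. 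You instead take Laplace--Beltrami eigenfunctions $\phi_n$ on $\partial\Omega$, use the \emph{Dirichlet} harmonic extension, and obtain $\|u_n\|_{\boldL^2(\Omega)}\to 0$ from the compact embedding $H^1(\partial\Omega)\hookrightarrow H^{1/2}(\partial\Omega)$; the weak convergence $\hat u_n\rightharpoonup 0$ then replaces the paper's direct argument that no subsequence converges. Your route is arguably more explicit and avoids appealing to~\eqref{eq:CostabelTrace}, at the price of invoking the Laplace--Beltrami spectral decomposition on a Lipschitz surface (which is available via the quadratic-form construction and compactness of $H^1(\partial\Omega)\hookrightarrow L^2(\partial\Omega)$).

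One small correction: for $u_n=\nabla\tilde\phi_n$ with $\tr\tilde\phi_n=\phi_n$ one has $\tr_{\nv\times}u_n=\nu\times\nabla_\partial\phi_n=\curl_\partial\phi_n$, not $\nabla_\partial\phi_n$. This is harmless, since $\|\curl_\partial\phi_n\|_{\boldL^2_t(\partial\Omega)}=\|\nabla_\partial\phi_n\|_{\boldL^2_t(\partial\Omega)}=1$ and weak convergence to zero in $\boldL^2_t(\partial\Omega)$ is preserved under the pointwise rotation by $\nu$.
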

\begin{proof}
We construct a singular sequence $(w_{1,n}\in W_1)_{n\in\setN}$ for $A(0)$, i.e.\ $\|w_{1,n}\|_X=1$ for each $n\in\setN$,
$(w_{1,n})_{n\in\setN}$ admits no converging subsequence and $\lim_{n\in\setN} A(0)w_{1,n}=0$.

To this end let $(f_n\in L^2(\partial\Omega)\setminus\{0\})_{n\in\setN}$ be a sequence which admits no converging
subsequence and which converges to $f\in H^{-1/2}(\partial\Omega)\setminus L^2(\partial\Omega)$ in
$H^{-1/2}(\partial\Omega)$ so that $\|f_n\|_{L^2(\partial\Omega)}\to+\infty$ as $n\to+\infty$.
Let $\tilde w_{1,n}\in H^1_*(\Omega)$ be the solution to
\begin{align*}
-\Delta \tilde w_{1,n} &=0\quad\text{in }\Omega,\\
\nv\cdot\nabla \tilde w_{1,n} &=f_n\quad\text{at }\partial\Omega.
\end{align*}
The volume part of the norm $\|\nabla \tilde w_{1,n}\|_{\boldL^2(\Omega)}$ can
be uniformly bounded by
\begin{align*}
\sup_{n\in\setN}\|f_n\|_{H^{-1/2}(\partial\Omega)}.
\end{align*}
Due to~\eqref{eq:CostabelTrace} we know $\|\tr_{\nv\times}\nabla\tilde w_{1,n}\|_{\boldL_t^2(\partial\Omega)}<+\infty$
and there exists $C>0$ independent of $\nabla\tilde w_{1,n}$ so that
\begin{align*}
\|f_n\|_{L^2(\partial\Omega)}
=\|\tr_{\nv\cdot}\nabla\tilde w_{1,n}\|_{L^2(\partial\Omega)}
\leq C(\|\nabla \tilde w_{1,n}\|_{\boldL^2(\Omega)}
+\|\tr_{\nv\times}\nabla\tilde w_{1,n}\|_{\boldL_t^2(\partial\Omega)}).
\end{align*}
It follows $\|\tr_{\nv\times}\nabla\tilde w_{1,n}\|_{\boldL_t^2(\partial\Omega)}
\to+\infty$ as $n\to+\infty$. Hence
\begin{align*}
\|A_X(0)\nabla\tilde w_{1,n}\|_X\leq
\sqrt{3} \|\epsilon\|_{(L^\infty(\Omega))^{3\times 3}}
\|\nabla\tilde w_{1,n}\|_{\boldL^2(\Omega)}.
\end{align*}
Let $w_{1,n}:=\nabla\tilde w_{1,n}/\|\nabla\tilde w_{1,n}\|_X$.
It follows $\|w_{1,n}\|_X=1$ and $A_X(0)w_{1,n}\to0$ as $n\to+\infty$.
The existence of a converging subsequence of $(w_{1,n}\in W_1)_{n\in\setN}$ would imply that
$(f_n\in L^2(\partial\Omega))_{n\in\setN}$ admits a converging subsequence, which is a contradiction.
Hence $(w_{1,n})_{n\in\setN}$ is indeed a singular sequence for $A_X(0)$.
\end{proof}

\section{Compatible approximation of the Stekloff eigenvalue problem}\label{sec:approximation}
In this section we discuss Galerkin approximations of~\eqref{eq:EVP}. In addition to the basic Assumption~\ref{ass:XnI}
we embrace in Assumption~\ref{ass:XnII} the existence of uniformly bounded commuting projections like in
\cite{ArnoldFalkWinther:10}. Since we work with the space $H(\curl,\tr_{\nv\times};\Omega)$ rather than $H(\curl;\Omega)$,
our assumption involves an additional projection on $\boldL^2_t(\partial\Omega)$ compared to~\cite{ArnoldFalkWinther:10}.
We report in Corollary~\ref{cor:Tncompatible} that for Galerkin approximations which satisfy these two assumptions,
we can construct a sequence of operator functions $T_n(\cdot)\colon \setC\setminus\{0\}\to L(X_n)$ which converges to
$T(\cdot)$ in discrete norm~\eqref{eq:discretenorm} at each $\lambda\in\setC\setminus\{0\}$. The prove is based on
Lemma~\ref{lem:QuasiBestAppr} and Lemma~\ref{lem:discnormP} and applies techniques as outlined in~\cite{ArnoldFalkWinther:10}.
Consequently we report in Theorem~\ref{thm:SpecAppr} that the abstract framework of~\cite{Halla:19Tcomp}
(which is based on the exhaustive works of Karma~\cite{Karma:96a}, \cite{Karma:96b}) is applicable.
However, the existence and possible construction of such projection operators remain open questions!\\

Consider the following basic assumption.
\begin{assumption}\label{ass:XnI}
Let $(X_n)_{n\in\setN}$ be so that $X_n\subset X$ and $\dim X_n<\infty$ for each $n\in\setN$, and
\begin{align}
\lim_{n\in\setN}\inf_{u'\in X_n}\|u-u'\|_X=0\text{ for each }u\in X.
\end{align}
\end{assumption}

Consider the following additional assumption.
\begin{assumption}\label{ass:XnII}
There exists $(\pi^X_n)_{n\in\setN}$ so that
\begin{subequations}
\begin{align}
\pi^X_n\in L\big(\boldL^2(\Omega)\big) \text{ is a projector with }X_n = \ran \pi^X_n,\\
\sup_{n\in\setN} \|\pi^X_n\|_{L(\boldL^2(\Omega))} < +\infty.
\end{align}
\end{subequations}
Let $Y:=\boldL^2(\Omega)$ and $Z:=\boldL^2_t(\partial\Omega)$.
There exist sequences $(Y_n,Z_n,\pi^Y_n,\pi^Z_n,)_{n\in\setN}$ so that for each $H\in\{Y,Z\}$ it hold
\begin{subequations}
\begin{align}
H_n\subset H, \qquad \lim_{n\in\setN}\inf_{u'\in H_n}\|u-u'\|_H=0,\\
\pi^H_n\in L(H) \text{ is a projector with }H_n \subset \ran \pi^H_n,\\
\sup_{n\in\setN} \|\pi^H_n\|_{L(H)} < +\infty.
\end{align}
\end{subequations}
Denote $E\in L\big(X,\boldL^2(\Omega)\big)$ the embedding operator and set
\begin{align}
\pi_n:=\pi^X_nE.
\end{align}
Further let
\begin{align}
\curl\circ\pi_n u=\pi^Y_n\circ\curl u \qquad\text{and}\qquad
\tr_{\nv\times} \circ \pi_n u=\pi^Z_n \circ \tr_{\nv\times} u
\end{align}
for each $u\in X$.
\end{assumption}

\begin{lemma}\label{lem:QuasiBestAppr}
Let Assumptions~\ref{ass:XnI} and \ref{ass:XnI} hold true. Then the projections $\pi^X_n, \pi^Y_n$ and $\pi^Z_n$
converge point-wise to the identity in $\boldL^2(\Omega), \boldL^2(\Omega)$ and $\boldL_t^2(\partial\Omega)$ respectively.
\end{lemma}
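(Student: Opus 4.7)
The plan is to invoke the standard fact that a uniformly bounded sequence of projectors whose ranges are asymptotically dense converges pointwise to the identity, and apply it three times. The key elementary estimate is that for any bounded projector $\pi$ on a Hilbert space $H$ and any $v\in\ran\pi$, one has $\pi v=v$ and therefore
\begin{align*}
\|\pi u-u\|_H=\|\pi(u-v)-(u-v)\|_H\leq \bigl(1+\|\pi\|_{L(H)}\bigr)\|u-v\|_H.
\end{align*}
Taking the infimum over $v$ in the range of $\pi$ yields a quasi-best approximation bound.

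First, I would handle the cases $H\in\{Y,Z\}$ directly, as Assumption~\ref{ass:XnII} supplies both the required density $\lim_{n\in\setN}\inf_{u'\in H_n}\|u-u'\|_H=0$ and the uniform boundedness of $(\pi^H_n)_{n\in\setN}$. Since $H_n\subset\ran\pi^H_n$, the above estimate applied with $v\in H_n$ gives
\begin{align*}
\|\pi^H_n u-u\|_H\leq \Bigl(1+\sup_{n\in\setN}\|\pi^H_n\|_{L(H)}\Bigr)\inf_{u'\in H_n}\|u-u'\|_H
\end{align*}
for every $u\in H$, and the right-hand side tends to zero.

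For $\pi^X_n$, the setup is slightly different because the operator acts on $\boldL^2(\Omega)$ while Assumption~\ref{ass:XnI} only guarantees density of $X_n$ in $X=H(\curl,\tr_{\nv\times};\Omega)$. I would bridge this gap by noting that $X$ is itself dense in $\boldL^2(\Omega)$: it contains $\boldC_c^\infty(\Omega)$ (smooth compactly supported vector fields, which have vanishing tangential trace and hence lie in $X$), and $\boldC_c^\infty(\Omega)$ is dense in $\boldL^2(\Omega)$. Combining this with the density of $X_n$ in $X$ and the continuity of the embedding $X\hookrightarrow\boldL^2(\Omega)$ via $\|\cdot\|_{\boldL^2(\Omega)}\leq\|\cdot\|_X$ shows that $X_n$ is dense in $\boldL^2(\Omega)$. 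Then applying the same quasi-best approximation estimate, now with $H=\boldL^2(\Omega)$ and $\ran\pi^X_n=X_n$, yields pointwise convergence $\pi^X_n u\to u$ in $\boldL^2(\Omega)$ for every $u\in\boldL^2(\Omega)$.

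There is really no main obstacle here; this is a classical Banach--Steinhaus-type argument. The only mild subtlety worth spelling out is the density of $X_n$ in $\boldL^2(\Omega)$ (rather than merely in $X$), which is why I would isolate that step explicitly.
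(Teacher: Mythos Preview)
Your proposal is correct and follows essentially the same approach as the paper: both use the quasi-best-approximation estimate $\|(1-\pi)u\|\leq(1+\|\pi\|)\inf_{v\in\ran\pi}\|u-v\|$ together with the density of $X$ (and hence of $X_n$) in $\boldL^2(\Omega)$, and treat $\pi^Y_n,\pi^Z_n$ analogously. You spell out the density step via $\boldC_c^\infty(\Omega)\subset X$ where the paper just says ``$X$ is densely embedded in $\boldL^2(\Omega)$'', but the argument is the same.
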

\begin{proof}
We proceed as in~\cite{ArnoldFalkWinther:10}. Let $u\in\boldL^2(\Omega)$ and $u_n\in X_n$. Since $\pi^X_n$ is a projector
it follows
\begin{align*}
\|(1-\pi^X_n)u\|_{\boldL^2(\Omega)} &= \|(1-\pi^X_n)(u-u_n)\|_{\boldL^2(\Omega)}\\
&\leq \big(1+ \sup_{n\in\setN} \|\pi^X_n\|_{L(\boldL^2(\Omega))}\big) \|u-u_n\|_{\boldL^2(\Omega)}
\end{align*}
and hence
\begin{align*}
\|(1-\pi^X_n)u\|_{\boldL^2(\Omega)} &\leq \big(1+ \sup_{n\in\setN} \|\pi^X_n\|_{L(\boldL^2(\Omega))} \big)
\inf_{u_n\in X_n} \|u-u_n\|_{\boldL^2(\Omega)}.
\end{align*}
Since $X$ is densely embedded in $\boldL^2(\Omega)$ and due to Assumption~\ref{ass:XnI} the claim follows for $\pi^X_n$.
The claims for $\pi^Y_n$ and $\pi^Z_n$ follow like-wise.
\end{proof}

\begin{lemma}\label{lem:discnormP}
Let Assumptions~\ref{ass:eps}, \ref{ass:Domain}, \ref{ass:XnI} and \ref{ass:XnII} hold true. Then
\begin{subequations}
\begin{align}
\lim_{n\in\setN}\inf_{u\in X_n\setminus\{0\}}\|(1-\pi_n)P_Vu\|_X/\|u\|_X&=0,\\
\lim_{n\in\setN}\inf_{u\in X_n\setminus\{0\}}\|(1-\pi_n)P_{W_1}u\|_X/\|u\|_X&=0,\\
\lim_{n\in\setN}\inf_{u\in X_n\setminus\{0\}}\|(1-\pi_n)P_{W_2}u\|_X/\|u\|_X&=0.
\end{align}
\end{subequations}
\end{lemma}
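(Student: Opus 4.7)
The natural interpretation of the three identities is that $\sup_{u \in X_n \setminus \{0\}}\|(1-\pi_n)P_\bullet u\|_X/\|u\|_X \to 0$, i.e.\ the discrete norm of $(1-\pi_n)P_\bullet$ on $X_n$ vanishes; this is exactly what is needed to construct $T_n(\cdot)$ from $T(\cdot)$ via $T_n:=\pi_n T|_{X_n}$. My plan is to split the norm according to
\[
 \|(1-\pi_n)w\|_X^2=\|(1-\pi^X_n)w\|_{\boldL^2(\Omega)}^2+\|(1-\pi^Y_n)\curl w\|_{\boldL^2(\Omega)}^2+\|(1-\pi^Z_n)\tr_{\nv\times}w\|_{\boldL^2_t(\partial\Omega)}^2
\]
(using the commuting diagram) and then to exploit the compactness properties of the ranges of $P_V$ and $P_{W_1}$; the pointwise convergence Lemma~\ref{lem:QuasiBestAppr} combined with uniform boundedness of $\pi^{\bullet}_n$ turns pointwise convergence into uniform convergence on relatively compact sets.

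\textbf{(a) $P_V$.} The crucial observation is that $W_1,W_2\subset H(\curl^0;\Omega)$, so $\curl P_V u=\curl u$ for every $u\in X$. For $u_n\in X_n$ the identity $\pi_n u_n=u_n$ together with the commuting property yields $\curl u_n=\pi^Y_n\curl u_n$, hence the $\curl$-contribution $(1-\pi^Y_n)\curl P_V u_n$ vanishes identically. For the remaining two contributions I would invoke compactness: by \eqref{eq:CostabelDomain} the image $\{P_V u:\|u\|_X\le 1\}\subset V$ is relatively compact in $\boldL^2(\Omega)$, and by \eqref{eq:Vtraceregularity} $\tr_{\nv\times}$ maps it into a relatively compact subset of $\boldL^2_t(\partial\Omega)$. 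On each of these two compact sets the sequences $(1-\pi^X_n)$ and $(1-\pi^Z_n)$ converge to $0$ uniformly, giving (a).

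\textbf{(b) $P_{W_1}$.} Because $W_1\subset H(\curl^0;\Omega)$, the $\curl$-contribution is even identically zero. For the $\boldL^2$-contribution, $P_{W_1}$ maps $X$ into $W_1\subset H(\curl,\div,\tr_{\nv\times};\Omega)$, so \eqref{eq:CostabelDomain} again gives relative compactness of the image in $\boldL^2(\Omega)$. The hardest single step is the trace contribution: for $u\in X$ the construction in Step~2a of Theorem~\ref{thm:VW} gives $P_{W_1}u=\nabla w_*$ with $w_*\in H^1_*(\Omega)$ harmonic and $\partial_\nv w_*=\tr_{\nv\cdot}(u-P_{W_2}u)$; by \eqref{eq:CostabelTrace} the right-hand side lies in $L^2(\partial\Omega)\cap\{g:\langle g,1\rangle=0\}$, so Assumption~\ref{ass:Domain} yields $w_*\in H^{3/2+\delta}(\Omega)$ and therefore $\tr_{\nv\times}P_{W_1}u\in\boldH^{\delta}_t(\partial\Omega)$ with continuous dependence on $u$. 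Compact Sobolev embedding on $\partial\Omega$ then makes $\tr_{\nv\times}P_{W_1}:X\to\boldL^2_t(\partial\Omega)$ compact, and the uniform convergence argument closes (b). I expect this shift-theorem/compact-embedding step to be the main technical obstacle.

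\textbf{(c) $P_{W_2}$.} I would derive this from (a) and (b) by a bookkeeping trick rather than attacking it directly (which would be problematic since $\nabla H^1_0\hookrightarrow\boldL^2(\Omega)$ is not compact). For $u_n\in X_n$ we have $\pi_n u_n=u_n$, so decomposing $u_n=P_Vu_n+P_{W_1}u_n+P_{W_2}u_n$ and applying $1-\pi_n$ gives
\[
(1-\pi_n)P_{W_2}u_n=-(1-\pi_n)P_Vu_n-(1-\pi_n)P_{W_1}u_n.
\]
Hence the $X$-norm of the left-hand side is bounded by the sum of the two quantities controlled in (a) and (b), and (c) follows at once. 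This also makes clear why the restriction to $u_n\in X_n$ (and not to general $u\in X$) is essential in the statement.
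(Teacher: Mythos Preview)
Your overall strategy and part (a) match the paper's proof exactly. The difference is that you and the paper swap the roles of $P_{W_1}$ and $P_{W_2}$: the paper treats $P_{W_2}$ directly and then obtains $P_{W_1}$ from $P_{W_1}=1-P_V-P_{W_2}$, whereas you attempt $P_{W_1}$ directly and deduce $P_{W_2}$ by bookkeeping. Your part (c) is fine and is precisely the trick the paper uses---just applied to a different projector.

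The genuine gap is in part (b), in the trace contribution for $P_{W_1}$. You invoke Assumption~\ref{ass:Domain} with Neumann datum $g=\tr_{\nv\cdot}(u-P_{W_2}u)$, but that assumption requires $g\in H^{1/2}(\partial\Omega)$, while \eqref{eq:CostabelTrace} only gives $g\in L^2(\partial\Omega)$. So you cannot conclude $w_*\in H^{3/2+\delta}(\Omega)$ from the hypotheses of the lemma, and your compactness argument for $\tr_{\nv\times}P_{W_1}$ does not go through as written.

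There are two easy repairs. The first is to do exactly what the paper does: treat $P_{W_2}$ directly. Since $\curl P_{W_2}=0$ and $\tr_{\nv\times}P_{W_2}=0$, only the $\boldL^2(\Omega)$-contribution survives, and your own bookkeeping identity $(1-\pi_n)P_{W_2}u_n=-(1-\pi_n)(P_V+P_{W_1})u_n$ together with the compactness of $E|_{V\oplus W_1}=E|_{H(\curl,\div^0,\tr_{\nv\times};\Omega)}$ into $\boldL^2(\Omega)$ (from \eqref{eq:CostabelDomain}) finishes it; then $P_{W_1}$ follows from $P_{W_1}=1-P_V-P_{W_2}$. The second repair keeps your order but avoids the shift theorem: for $u_n\in X_n$ one has $\tr_{\nv\times}P_{W_2}u_n=0$ and $(1-\pi^Z_n)\tr_{\nv\times}u_n=0$, hence
\[
(1-\pi^Z_n)\tr_{\nv\times}P_{W_1}u_n
=(1-\pi^Z_n)\tr_{\nv\times}(u_n-P_Vu_n-P_{W_2}u_n)
=-(1-\pi^Z_n)\tr_{\nv\times}P_Vu_n,
\]
which you already control in (a) via \eqref{eq:Vtraceregularity}.
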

\begin{proof}
We proceed as in~\cite{ArnoldFalkWinther:10}. 
Let $u_n\in X_n$. Due to $\curl P_{W_2}u_n=0$, $\tr_{\nv\times}P_{W_2}u_n=0$ and Assumption~\ref{ass:XnII} it
hold
\begin{align*}
\curl \pi_n P_{W_2}u_n=\pi^Y_n \curl P_{W_2}u_n=0
\end{align*}
and
\begin{align*}
\tr_{\nv\times} \pi_n P_{W_2}u_n=\pi^Z_n \tr_{\nv\times} P_{W_2}u_n=0.
\end{align*}
Hence
\begin{align*}
\|(1-\pi_n)P_{W_2}u_n\|_X &= \|(1-\pi_n)P_{W_2}u_n\|_{\boldL^2(\Omega)} = \|(1-\pi_n)(1-P_{W_2})u_n\|_{\boldL^2(\Omega)}\\
& \leq \|(1-\pi^X_n)E(1-P_{W_2})\|_{L(X,\boldL^2(\Omega)} \|u_n\|_X.
\end{align*}
Since $E|_{\ran (1-P_{W_2})}=E|_{H(\curl,\div^0,\tr_{\nv\times};\Omega)}$ is compact due to
\eqref{eq:CostabelDomain} and $1-\pi^X_n$ tends point-wise to zero it follows
$\lim_{n\in\setN}\|(1-\pi^X_n)E(1-P_{W_2})\|_{L(X,\boldL^2(\Omega))}=0$.\\

We compute
\begin{align*}
\curl \pi_n P_V u_n &= \pi^Y_n \curl P_V u_n = \pi^Y_n \curl (P_V+P_{W_1}+P_{W_1}) u_n\\
&= \pi^Y_n \curl u_n = \curl u_n = \curl (P_V+P_{W_1}+P_{W_1}) u_n = \curl P_V u_n
\end{align*}
and hence
\begin{align*}
\|(1-\pi_n)P_Vu_n\|_X^2 &= \|(1-\pi_n)P_Vu_n\|_{\boldL^2(\Omega)}^2
+ \|\tr_{\nv\times}(1-\pi_n)P_Vu_n\|_{\boldL_t^2(\partial\Omega)}^2.
\end{align*}
We estimate the first term
\begin{align*}
\|(1-\pi_n)P_Vu_n\|_{\boldL^2(\Omega)} &\leq \|(1-\pi^X_n)EP_V\|_{L(X,\boldL^2(\Omega))} \|u_n\|_X.
\end{align*}
As previously we obtain $\lim_{n\in\setN}\|(1-\pi^X_n)EP_V\|_{L(X,\boldL^2(\Omega))}=0$.
We estimate the second term
\begin{align*}
\|\tr_{\nv\times}(1-\pi_n)P_Vu_n\|_{\boldL_t^2(\partial\Omega)}
&= \|(1-\pi^Z_n)\tr_{\nv\times}P_Vu_n\|_{\boldL_t^2(\partial\Omega)} \\
&\leq \|(1-\pi^Z_n)\tr_{\nv\times}P_V\|_{L(X,\boldL_t^2(\partial\Omega))} \|u_n\|_X.
\end{align*}
Due to \eqref{eq:Vtraceregularity} $\tr_{\nv\times}|_V$ is compact, $(1-\pi^Z_n)$ tends point-wise to zero and hence
\begin{align*}
\lim_{n\in\setN} \|(1-\pi^Z_n)\tr_{\nv\times}P_V\|_{L(X,\boldL_t^2(\partial\Omega))}=0.
\end{align*}
The claim for $P_{W_1}$ follows from $P_{W_1}=1-P_V-P_{W_2}$.
\end{proof}

\begin{corollary}\label{cor:Tncompatible}
Let Assumptions~\ref{ass:eps}, \ref{ass:Domain}, \ref{ass:XnI} and \ref{ass:XnII} hold true.
Let $T_n(\lambda)\in L(X_n)$ be defined as $T_n(\lambda):=\pi_n T(\lambda)|_{X_n}$ for each $\lambda\in\setC\setminus\{0\}$.
Then
\begin{align}
\lim_{n\in\setN} \|T(\lambda)-T_n(\lambda)\|_n = 0
\end{align}
for each $\lambda\in\setC\setminus\{0\}$.
\end{corollary}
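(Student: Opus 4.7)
The plan is to reduce the claim directly to the three limits already established in Lemma~\ref{lem:discnormP}, using the fact that $T(\lambda)$ is by construction a finite linear combination of the three projections $P_V, P_{W_1}, P_{W_2}$ with $\lambda$-dependent scalar coefficients.

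First I would observe that for any $u_n \in X_n$ we have
\begin{align*}
\bigl(T(\lambda) - T_n(\lambda)\bigr) u_n = (1-\pi_n)\, T(\lambda) u_n,
\end{align*}
since $T_n(\lambda) = \pi_n T(\lambda)|_{X_n}$. Inserting the explicit form \eqref{eq:T} of $T(\lambda)$ and applying the triangle inequality in $X$ gives
\begin{align*}
\|(T(\lambda)-T_n(\lambda))u_n\|_X \leq \|(1-\pi_n) P_V u_n\|_X + |\lambda|^{-1} \|(1-\pi_n) P_{W_1} u_n\|_X + \omega^{-2} \|(1-\pi_n) P_{W_2} u_n\|_X.
\end{align*}

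Next I would divide by $\|u_n\|_X$ and take the supremum over $u_n \in X_n \setminus \{0\}$, yielding
\begin{align*}
\|T(\lambda) - T_n(\lambda)\|_n \leq \alpha_n^V + |\lambda|^{-1} \alpha_n^{W_1} + \omega^{-2} \alpha_n^{W_2},
\end{align*}
where $\alpha_n^V, \alpha_n^{W_1}, \alpha_n^{W_2}$ denote the three quantities appearing in Lemma~\ref{lem:discnormP} (with supremum rather than infimum over $u \in X_n \setminus \{0\}$, which is what the proof of that lemma actually supplies through the uniform operator-norm bounds $\|(1-\pi_n^X) E (1-P_{W_2})\|$, $\|(1-\pi_n^X) E P_V\|$ and $\|(1-\pi_n^Z) \tr_{\nv\times} P_V\|$).

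Finally, since each of $\alpha_n^V, \alpha_n^{W_1}, \alpha_n^{W_2}$ tends to zero as $n \to \infty$ by Lemma~\ref{lem:discnormP}, and the coefficients $|\lambda|^{-1}$ and $\omega^{-2}$ are finite for $\lambda \in \setC \setminus \{0\}$ and $\omega > 0$, the right-hand side tends to zero, which proves the claim. There is no real obstacle: all analytic work (the compactness of $E|_{H(\curl,\div^0,\tr_{\nv\times};\Omega)}$ via \eqref{eq:CostabelDomain} and of $\tr_{\nv\times}|_V$ via \eqref{eq:Vtraceregularity}, combined with pointwise convergence of the commuting projections) has been absorbed into Lemma~\ref{lem:discnormP}; the corollary is then a one-line triangle-inequality argument exploiting the particular form of $T(\lambda)$.
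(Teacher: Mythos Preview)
Your proposal is correct and matches the paper's own proof, which is literally the one-line statement ``Follows from the definition of $T(\lambda)$, the triangle inequality and Lemma~\ref{lem:discnormP}.'' You have simply unpacked that sentence, and your parenthetical remark catching the $\inf$/$\sup$ slip in the statement of Lemma~\ref{lem:discnormP} is also accurate.
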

\begin{proof}
Follows from the definition of $T(\lambda)$, the triangle inequality and Lemma~\ref{lem:discnormP}.
\end{proof}

\begin{theorem}\label{thm:SpecAppr}
Let Assumptions~\ref{ass:eps}, \ref{ass:mu}, \ref{ass:Domain} and \ref{ass:UCP} hold true.
Let $X$, $A_X(\cdot)$ and $T(\cdot)$ be as defined in~\eqref{eq:X}, \eqref{eq:DefAX}
and~\eqref{eq:T} respectively. Let Assumptions~\ref{ass:XnI} and~\ref{ass:XnII} hold true.
Then $A_X(\cdot)\colon\setC\setminus\{0\} \to L(X)$ is a holomorphic weakly $T(\cdot)$-coercive operator function
with non-empty resolvent set and the sequence of Galerkin approximations
$\big(P_nA_X(\cdot)|_{X_n}\colon \setC\setminus\{0\} \to L(X_n)\big)_{n\in\setN}$ is $T(\cdot)$-compatible.
Thus~\cite[Corollary~2.8]{Halla:19Tcomp} is applicable.
\end{theorem}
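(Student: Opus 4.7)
The plan is to recognize that this theorem is a consolidation statement: every analytic ingredient needed has been established in Section~\ref{sec:AFredholm} and in Lemma~\ref{lem:discnormP}/Corollary~\ref{cor:Tncompatible}, so the task reduces to verifying that the hypotheses of the abstract convergence framework~\cite[Corollary~2.8]{Halla:19Tcomp} are all in place on $\Lambda:=\setC\setminus\{0\}$. I would organize the argument by running through the four ingredients of that framework and citing the corresponding result of the paper.

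First, holomorphicity of $A_X(\cdot)$ is immediate from~\eqref{eq:DefAX}, since it is affine in $\lambda$. Weak $T(\cdot)$-coercivity on $\setC\setminus\{0\}$ is exactly the content of Theorem~\ref{thm:AwTc}, where the test function operator function~\eqref{eq:T} was tailored to the decomposition $X=V\oplus W_1\oplus W_2$ of Theorem~\ref{thm:VW}. The non-emptiness of $\rho\bigl(A_X(\cdot)\bigr)$ is provided by Corollary~\ref{cor:ressetnonempty}, which in fact shows that the whole open lower half-plane $\{\lambda\in\setC\colon \Im(\lambda)<0\}$ belongs to the resolvent set.

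For the $T(\cdot)$-compatibility of the Galerkin approximation, I would use $T_n(\lambda):=\pi_n T(\lambda)|_{X_n}$ as furnished by Corollary~\ref{cor:Tncompatible}; that corollary delivers, for each fixed $\lambda\in\setC\setminus\{0\}$, the discrete-norm convergence $\|T(\lambda)-T_n(\lambda)\|_n\to 0$. Together with Assumption~\ref{ass:XnI}, which supplies the pointwise approximation $\|u-P_nu\|_X\to 0$ for all $u\in X$ required in the definition of compatibility, this is exactly $T(\lambda)$-compatibility of $P_nA_X(\lambda)|_{X_n}$, uniformly available across $\lambda\in\setC\setminus\{0\}$, i.e.\ $T(\cdot)$-compatibility of the Galerkin approximation. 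With all four ingredients assembled, the applicability of~\cite[Corollary~2.8]{Halla:19Tcomp} is automatic. I do not expect a genuine obstacle here: the subtle analytic points (the trace compactness~\eqref{eq:Vtraceregularity}, the decomposition of $X$, the sign-change design of $T(\cdot)$, and the commuting-projection estimates behind Lemma~\ref{lem:discnormP}) have already been addressed, and the present proof is merely a matter of citing them in the correct order.
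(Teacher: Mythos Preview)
Your proposal is correct and matches the paper's own proof, which simply cites Theorem~\ref{thm:AwTc}, Corollary~\ref{cor:ressetnonempty} and Corollary~\ref{cor:Tncompatible}. Your version is more explicit about the holomorphicity and the role of Assumption~\ref{ass:XnI}, but the approach is identical.
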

\begin{proof}
Follows from Theorem~\ref{thm:AwTc}, Corollary~\ref{cor:ressetnonempty} and Corollary~\ref{cor:Tncompatible}.
\end{proof}

\section{Weak $T(\cdot)$-coercivity of the modified Stekloff operator function}\label{sec:modified}
First we introduce the modified electromagnetic Stekloff eigenvalue problem proposed in~\cite{CamanoLacknerMonk:17}
as holomorphic operator function eigenvalue problem. We proceed as in Section~\ref{sec:AFredholm}.
In Theorem~\ref{thm:VWtilde} we report an apt decomposition of the respective Hilbert space into two subspaces.
Next we introduce in~\eqref{eq:Ttilde} an operator $\tilde T$ as an apt sign change on the subspaces.
In Theorem~\ref{thm:AwTctilde} we report the weak $\tilde T$-coercivity of the modified Stekloff operator function.
In Corollary~\ref{cor:ressetnonemptytilde} we deduce convenient properties of the spectrum in $\setC$.
We report in a companion article \cite{Halla:19StekloffExist} the existence and behavior of eigenvalues for purely real,
symmetric $\mu$ and $\epsilon$, i.e.\ in the selfadjoint case.
In Subsection~\ref{subsec:Aux} we introduce a formulation with an auxiliary variable, which implicitly realizes the
action of the operator $S$ and prove respective properties.\\

The modified electromagnetic Stekloff eigenvalue problem is to
\begin{align}\label{eq:EVPtilde}
\text{find}\quad (\lambda,u)\in \setC\times H(\curl;\Omega)
\setminus\{0\}\quad\text{so that}\quad \tilde A(\lambda)u=0,
\end{align}
whereby $\tilde A(\lambda)\in L\big(H(\curl;\Omega)\big)$ is defined through
\begin{align}
\begin{aligned}
\spl \tilde A(\lambda)&u,u'\spr_{H(\curl;\Omega)}:=
\spl \mu^{-1} \curl u,\curl u'\spr_{\boldL^2(\Omega)}
-\omega^2\spl\epsilon u,u'\spr_{\boldL^2(\Omega)}\\
&-\lambda \spl Su,Su'\spr_{\boldL^2_t(\partial\Omega)}
\quad\text{for all }u,u'\in H(\curl;\Omega), \lambda\in\setC
\end{aligned}
\end{align}
and $S$ is as defined in~\eqref{eq:DefS}.
We note again that the sign of $\lambda$ herein is reversed compared to~\cite{CamanoLacknerMonk:17}.
Also, we employ $\tr_{\nv\times}u$ opposed to $u_\nv =\tr_{\nv\times}u\times\nv$ in~\cite{CamanoLacknerMonk:17}
and hence we employ through $S$ a map onto gradient functions opposed to a map onto curl functions
as in~\cite{CamanoLacknerMonk:17}.
As in Section~\ref{sec:AFredholm} we introduce apt subspaces of $H(\curl;\Omega)$:
\begin{subequations}
\begin{align}
\tilde V&:=H(\curl,\div^0,\tr_{\nv\cdot}^0;\Omega),\\
\tilde W&:=H(\curl^0;\Omega)=\nabla H^1(\Omega).
\end{align}
\end{subequations}

\begin{theorem}\label{thm:VWtilde}
It holds
\begin{align}
H(\curl;\Omega)=\tilde V\oplus^{\bot_{H(\curl;\Omega)}} \tilde W,
\end{align}
i.e.\ the orthogonal projection operators $P_{\tilde V}, P_{\tilde W} \in L\big(H(\curl;\Omega)\big)$ satisfy
$\ran P_{\tilde V}=\tilde V, \ran P_{\tilde W}=\tilde W$,
$\tilde W=\ker P_{\tilde V}$, $\tilde V=\ker P_{\tilde W}$,
$u=P_{\tilde V}u+P_{\tilde W}u$ for each $u\in H(\curl;\Omega)$
and
\begin{align}\label{eq:scptildeXtilde}
\begin{aligned}
\spl u,u'\spr_{\tilde X}&:=\spl P_{\tilde V}u,P_{\tilde V}u'\spr_{H(\curl;\Omega)}
+\spl P_{\tilde W}u,P_{\tilde W}u'\spr_{H(\curl;\Omega)}
=\spl u,u'\spr_{H(\curl;\Omega)}
\end{aligned}
\end{align}
for all $u,u'\in H(\curl;\Omega)$.
\end{theorem}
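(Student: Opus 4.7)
The plan is to construct $P_{\tilde W}$ explicitly via a scalar potential and then set $P_{\tilde V} := \mathrm{Id}-P_{\tilde W}$. Orthogonality in $H(\curl;\Omega)$ will then follow automatically: the $\curl$-pairing vanishes because $\curl$ annihilates $\tilde W$, and the $\boldL^2$-pairing vanishes by the defining variational identity for the potential.

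First, for $u\in H(\curl;\Omega)$ I would solve the Neumann-type variational problem: find $\phi\in H^1_*(\Omega)$ such that
\begin{align*}
\spl \nabla\phi,\nabla\psi\spr_{\boldL^2(\Omega)}=\spl u,\nabla\psi\spr_{\boldL^2(\Omega)}\quad\text{for all }\psi\in H^1_*(\Omega).
\end{align*}
Lax--Milgram applies since the bilinear form is coercive on $H^1_*(\Omega)$ (Poincar\'e for zero-mean functions) and the right-hand side is continuous with bound $\|u\|_{\boldL^2(\Omega)}\|\nabla\psi\|_{\boldL^2(\Omega)}$. I would then define $P_{\tilde W}u:=\nabla\phi\in\nabla H^1(\Omega)=\tilde W$. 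Testing with $\psi=\phi$ gives $\|\nabla\phi\|_{\boldL^2(\Omega)}\leq\|u\|_{\boldL^2(\Omega)}$, so $P_{\tilde W}\in L\big(H(\curl;\Omega)\big)$.

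Next I would set $P_{\tilde V}u:=u-P_{\tilde W}u$. The variational equation is precisely $\spl P_{\tilde V}u,\nabla\psi\spr_{\boldL^2(\Omega)}=0$ for all $\psi\in H^1(\Omega)$ (the constant case is trivial), which is the variational characterization of $\div(P_{\tilde V}u)=0$ together with $\tr_{\nv\cdot}(P_{\tilde V}u)=0$, so $P_{\tilde V}u\in\tilde V$. The projection and range/kernel properties then follow by two direct checks: if $u=\nabla\tilde\phi\in\tilde W$ (with $\tilde\phi\in H^1_*(\Omega)$), then $\phi=\tilde\phi$ solves the problem, giving $P_{\tilde W}u=u$ and $P_{\tilde V}u=0$; if $u\in\tilde V$, the variational identity is satisfied by $\phi=0$, giving $P_{\tilde W}u=0$ and $P_{\tilde V}u=u$. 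Uniqueness of the decomposition follows since any element of $\tilde V\cap\tilde W$ would be $\nabla\phi$ with $\Delta\phi=0$ and $\partial_\nv\phi=0$, hence constant, hence zero as a gradient.

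Finally, I would verify orthogonality in $H(\curl;\Omega)$ by computing
\begin{align*}
\spl P_{\tilde V}u,P_{\tilde W}u\spr_{H(\curl;\Omega)}
=\spl P_{\tilde V}u,\nabla\phi\spr_{\boldL^2(\Omega)}
+\spl \curl P_{\tilde V}u,0\spr_{\boldL^2(\Omega)}=0,
\end{align*}
where the first term vanishes by the defining equation for $\phi$ (with test function $\psi=\phi$) and the second by $\curl\nabla=0$. The Pythagorean identity \eqref{eq:scptildeXtilde} is then immediate. The only really delicate point is the identification $H(\curl^0;\Omega)=\nabla H^1(\Omega)$ that is embedded in the definition of $\tilde W$; this is the step implicitly requiring that $\Omega$ carries no nontrivial Neumann cohomology (no harmonic Neumann fields), which would otherwise produce elements of $\tilde V\cap\tilde W$ and spoil the direct-sum character. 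Under that understanding the remainder is a routine Helmholtz-decomposition argument.
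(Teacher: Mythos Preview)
Your proposal is correct and is essentially an explicit unpacking of the paper's one-line proof (``All properties are due to the orthogonal decomposition''). The paper simply invokes that $\tilde W$ is a closed subspace of $H(\curl;\Omega)$ and that its $H(\curl;\Omega)$-orthogonal complement is $\tilde V$; your Lax--Milgram construction of $P_{\tilde W}$ is exactly the concrete realization of that orthogonal projection, since for $w\in\tilde W$ the $\curl$-part of $\spl u,w\spr_{H(\curl;\Omega)}$ vanishes and the projection reduces to the $\boldL^2$-nearest gradient. Your closing remark about the identification $H(\curl^0;\Omega)=\nabla H^1(\Omega)$ is a worthwhile caveat that the paper silently absorbs into the definition of $\tilde W$.
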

\begin{proof}
All properties are due to the orthogonal decomposition.
\end{proof}

We observe $\tilde W\subset \ker S$. We proceed further as in Section~\ref{sec:AFredholm}. Let
\begin{align}\label{eq:Xtilde}
\tilde X:=H(\curl;\Omega), \qquad
\spl\cdot,\cdot\spr_{\tilde X}\quad\text{as defined in~\eqref{eq:scptildeXtilde}}.
\end{align}
Let further $\tilde A_{\tilde X}(\cdot), \tilde A_c, \tilde A_\epsilon, \tilde A_{l^2}, \tilde A_{\tr}
\in L(\tilde X)$ be defined through
\begin{subequations}
\begin{align}
\label{eq:DefAXtilde}
\spl \tilde A_{\tilde X}(\lambda) u,u'\spr_{\tilde X}&:=\spl \tilde A(\lambda)u,u'\spr_{H(\curl;\Omega)}
\quad\text{for all }u,u'\in \tilde X,\lambda\in\setC,\\
\spl \tilde A_c u,u'\spr_{\tilde X}&:=\spl \mu^{-1}\curl u,\curl u'\spr_{\boldL^2(\Omega)}
\quad\text{for all }u,u'\in \tilde X,\\
\spl \tilde A_\epsilon u,u'\spr_{\tilde X}&:=\spl \epsilon u,u'\spr_{\boldL^2(\Omega)}
\quad\text{for all }u,u'\in \tilde X,\\
\spl \tilde A_{l^2} u,u'\spr_{\tilde X}&:=\spl u,u'\spr_{\boldL^2(\Omega)}
\quad\text{for all }u,u'\in \tilde X,\\
\spl \tilde A_{\tr} u,u'\spr_{\tilde X}&:=\spl Su,Su'\spr_{\boldL^2_t(\partial\Omega)}
\quad\text{for all }u,u'\in \tilde X.
\end{align}
\end{subequations}
From the definitions of $\tilde V$, $\tilde W$ and $\tilde W\subset \ker S$ we deduce that
\begin{align}\label{eq:AVWWtilde}
\begin{aligned}
\tilde A_{\tilde X}(\lambda)&=
(P_{\tilde V}+P_{\tilde W})(\tilde A_c-\omega^2 \tilde A_\epsilon-\lambda \tilde A_{\tr})(P_{\tilde V}+P_{\tilde W})\\
&={\color{blue}P_{\tilde V} \tilde A_c P_{\tilde V}}
-\omega^2 P_{\tilde V}\tilde A_\epsilon P_{\tilde V}
-\lambda P_{\tilde V}\tilde A_{\tr} P_{\tilde V}
{\color{blue}-\omega^2 P_{\tilde W}\tilde A_\epsilon P_{\tilde W}}\\
&-\omega^2 (P_{\tilde W}\tilde A_\epsilon P_{\tilde V}+P_{\tilde V}\tilde A_\epsilon P_{\tilde W}).
\end{aligned}
\end{align}
If we identify $\tilde X\sim \tilde V\times \tilde W$ and $\tilde X \ni u\sim (v,w)\in \tilde V\times \tilde W$,
we can identify $\tilde A_{\tilde X}(\lambda)$ with the block operator
\begin{align}\label{eq:Ablocktilde}
\bpm
{\color{blue}P_{\tilde V} \tilde A_c|_{\tilde V}} - P_{\tilde V}(\omega^2 \tilde A_\epsilon +\lambda \tilde A_{\tr}) |_{\tilde V}
&-\omega^2 P_{\tilde V}\tilde A_\epsilon |_{\tilde W}\\
-\omega^2 P_{\tilde W}\tilde A_\epsilon |_{\tilde V}
&{\color{blue}-\omega^2 P_{\tilde W} \tilde A_\epsilon|_{\tilde W}}&
\epm.
\end{align}
We color highlighted in \eqref{eq:AVWWtilde} and \eqref{eq:Ablocktilde} the operators which are not compact. This leads
us to define a test function operator in the following way. Let
\begin{align}\label{eq:Ttilde}
\tilde T&:=P_{\tilde V}-\omega^{-2}P_{\tilde W}.
\end{align}
Obviously $\tilde T\in L(\tilde X)$ is bijective with $\tilde T^{-1}=P_{\tilde V}-\omega^2 P_{\tilde W}$.

\begin{theorem}\label{thm:AwTctilde}
Let $\epsilon$ suffice Assumption~\ref{ass:eps}, $\mu$ suffice Assumption~\ref{ass:mu}
and $\Omega$ suffice Assumption~\ref{ass:Domain}.
Thence $\tilde A_{\tilde X}(\cdot)\colon \setC\to L(\tilde X)$ is weakly $\tilde T$-coercive.
\end{theorem}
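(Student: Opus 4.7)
The plan is to mirror the proof of Theorem~\ref{thm:AwTc} with the simpler two-subspace decomposition $\tilde X = \tilde V \oplus \tilde W$ of Theorem~\ref{thm:VWtilde}. I would split $\tilde A_{\tilde X}(\lambda) = A_1 + A_2(\lambda)$ with
\begin{align*}
A_1 := P_{\tilde V}\tilde A_c P_{\tilde V} + P_{\tilde V}\tilde A_{l^2} P_{\tilde V} - \omega^2 P_{\tilde W}\tilde A_\epsilon P_{\tilde W},
\end{align*}
and $A_2(\lambda)$ collecting all remaining contributions, namely the off-diagonal $\tilde A_\epsilon$ cross terms, the corrections $-\omega^2 P_{\tilde V}\tilde A_\epsilon P_{\tilde V} - P_{\tilde V}\tilde A_{l^2} P_{\tilde V}$, and the $\lambda$-dependent trace term $-\lambda P_{\tilde V}\tilde A_{\tr}P_{\tilde V}$. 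The task then splits into showing that $\tilde T^*A_1$ is coercive and that $\tilde T^*A_2(\lambda)$ is compact.

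For the coercivity step, using $\tilde T = P_{\tilde V} - \omega^{-2}P_{\tilde W}$ and the $\tilde X$-orthogonality of $P_{\tilde V}, P_{\tilde W}$, a direct computation yields
\begin{align*}
\spl A_1 u, \tilde T u\spr_{\tilde X} = \spl \mu^{-1}\curl P_{\tilde V}u, \curl P_{\tilde V}u\spr_{\boldL^2(\Omega)} + \|P_{\tilde V}u\|_{\boldL^2(\Omega)}^2 + \spl \epsilon P_{\tilde W}u, P_{\tilde W}u\spr_{\boldL^2(\Omega)}.
\end{align*}
Assumptions~\ref{ass:eps} and~\ref{ass:mu} bound its real part below by $\min(1,c_\mu,c_\epsilon)$ times $\|\curl P_{\tilde V}u\|_{\boldL^2(\Omega)}^2 + \|P_{\tilde V}u\|_{\boldL^2(\Omega)}^2 + \|P_{\tilde W}u\|_{\boldL^2(\Omega)}^2$, which equals $\min(1,c_\mu,c_\epsilon)\|u\|_{\tilde X}^2$ since $\curl P_{\tilde W}u = 0$. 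The sign flip of $\tilde T$ on the $\tilde W$ block is essential here, as it turns $-\omega^2\tilde A_\epsilon$ into a positive quadratic form.

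For the compactness step, every summand of $A_2(\lambda)$ factors through either the embedding $\tilde V \hookrightarrow \boldL^2(\Omega)$ or the tangential trace $\tr_{\nv\times}|_{\tilde V}$. The former is the classical Weber-type compact embedding for $\tilde V = H(\curl,\div^0,\tr_{\nv\cdot}^0;\Omega)$ (the normal-trace companion of~\eqref{eq:CostabelDomain}, valid under Assumption~\ref{ass:Domain}), which dispatches the $\tilde A_\epsilon$ and $\tilde A_{l^2}$ contributions including the $\tilde V$--$\tilde W$ cross terms. For $-\lambda P_{\tilde V}\tilde A_{\tr}P_{\tilde V}$, I would exploit that for $u \in \tilde V$ the tangential trace lies in $\boldL^2_t(\partial\Omega)$ by~\eqref{eq:Vtraceregularity}, so the identification $Su = P_{\nabla_\partial}\tr_{\nv\times}u$ applies; the compactness of $\tr_{\nv\times}|_{\tilde V}$ from the same~\eqref{eq:Vtraceregularity} then makes $S|_{\tilde V}$, and hence $\tilde A_{\tr}|_{\tilde V}$, compact. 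Composition with the bounded $\tilde T^*$ preserves compactness.

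The only nonroutine step is the treatment of $\tilde A_{\tr}|_{\tilde V}$, which requires combining the pointwise identification $S = P_{\nabla_\partial}\tr_{\nv\times}$ on $\tilde V$ with the compactness statement in~\eqref{eq:Vtraceregularity}. Everything else is a strict simplification of the three-block argument already carried out in Section~\ref{sec:AFredholm}, with the $W_1$ block (and its non-compact $-\lambda A_{\tr}$ diagonal piece) absent; this simplification is precisely why $\tilde T$ need not be $\lambda$-dependent and $\tilde A_{\tilde X}(\cdot)$ is weakly $\tilde T$-coercive on the full plane $\setC$ rather than only on $\setC\setminus\{0\}$.
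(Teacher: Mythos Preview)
Your proof is correct and follows essentially the same route as the paper: the identical splitting $\tilde A_{\tilde X}(\lambda)=A_1+A_2$ with $A_1=P_{\tilde V}(\tilde A_c+\tilde A_{l^2})P_{\tilde V}-\omega^2 P_{\tilde W}\tilde A_\epsilon P_{\tilde W}$, the same coercivity bound $\Re\langle A_1u,\tilde Tu\rangle_{\tilde X}\geq\min(1,c_\mu,c_\epsilon)\|u\|_{\tilde X}^2$, and compactness of $A_2$ via the compact embedding of $\tilde V$ into $\boldL^2(\Omega)$ together with~\eqref{eq:Vtraceregularity}. Your write-up is in fact slightly more careful than the paper's, which cites~\eqref{eq:CostabelDomain} for the $\boldL^2$-compactness even though that statement concerns the $\tr_{\nv\times}$-space; you correctly note that one needs the normal-trace analogue for $\tilde V=H(\curl,\div^0,\tr_{\nv\cdot}^0;\Omega)$ (available under Assumption~\ref{ass:Domain} from the same Amrouche--Bernardi--Dauge--Girault result behind~\eqref{eq:Vtraceregularity}), and you make explicit that $S|_{\tilde V}=P_{\nabla_\partial}\tr_{\nv\times}|_{\tilde V}$ to reduce the compactness of $\tilde A_{\tr}|_{\tilde V}$ to~\eqref{eq:Vtraceregularity}.
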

\begin{proof}
Let $\lambda\in\setC$. Set
\begin{align*}
A_1&:=P_{\tilde V} \tilde A_c P_{\tilde V}
+P_{\tilde V} \tilde A_{l^2} P_{\tilde V}
-\omega^2 P_{\tilde W} \tilde A_\epsilon P_{\tilde W}
\end{align*}
and
\begin{align*}
A_2&:=-P_{\tilde V} \tilde A_{l^2} P_{\tilde V}
-\omega^2 P_{\tilde V} \tilde A_\epsilon P_{\tilde V}
-\lambda P_{\tilde V}\tilde A_{\tr} P_{\tilde V}
-\omega^2 (P_{\tilde W}\tilde A_\epsilon P_{\tilde V}+P_{\tilde V}\tilde A_\epsilon P_{\tilde W}).
\end{align*}
so that $\tilde A_{\tilde X}(\lambda)=A_1+A_2$. Operator $A_2$ is compact due to \eqref{eq:CostabelDomain}
and \eqref{eq:Vtraceregularity} and hence so is $T(\lambda)^*A_2$. It is straight forward to see
\begin{align*}
\Re(\spl A_1u,\tilde Tu \spr_{\tilde X}) \geq \min(1,c_\epsilon,c_\mu) \|u\|_{\tilde X}^2,
\end{align*}
i.e.\ $\tilde T^*A_1$ is coercive.
\end{proof}

As in~\cite{CamanoLacknerMonk:17} we impose an additional assumption.
\begin{assumption}\label{ass:kappa}
Let $\tilde A_{\tilde X}(0)$ be injective.
\end{assumption}

\begin{corollary}\label{cor:ressetnonemptytilde}
Let Assumptions~\ref{ass:eps}, \ref{ass:mu}, \ref{ass:Domain} and \ref{ass:kappa} hold true.
Then $\tilde A_{\tilde X}(\lambda)$ is bijective for all $\lambda\in\setC$ with $\Im(\lambda)<0$ and $\lambda=0$.
The spectrum of $\tilde A_{\tilde X}(\cdot)$ in $\setC$ consists of an at most countable set of eigenvalues
with finite algebraic multiplicity which have no accumulation point in $\setC$.
\end{corollary}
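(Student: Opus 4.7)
My plan is to mirror the three-step argument used in Corollary~\ref{cor:ressetnonempty}, adapted to the modified problem: first I will establish injectivity on a sufficiently rich subset of $\setC$, then upgrade injectivity to bijectivity by means of the Fredholm property granted by Theorem~\ref{thm:AwTctilde}, and finally I will invoke the analytic Fredholm theorem for the (affine, hence holomorphic) operator function $\tilde A_{\tilde X}(\cdot)$ to extract the spectral structure on all of $\setC$.

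First I handle $\lambda=0$. By Theorem~\ref{thm:AwTctilde}, $\tilde A_{\tilde X}(0)$ is weakly $\tilde T$-coercive and, since $\tilde T$ is bijective, Fredholm of index zero. Assumption~\ref{ass:kappa} supplies the missing injectivity, so $\tilde A_{\tilde X}(0)$ is bijective.

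Next I treat $\lambda\in\setC$ with $\Im\lambda<0$ by replicating the energy identity used in Corollary~\ref{cor:ressetnonempty}. Assuming $\tilde A_{\tilde X}(\lambda)u=0$, I take the imaginary part of $\spl\tilde A(\lambda)u,u\spr_{\tilde X}=0$ and combine the dissipation estimates from Assumptions~\ref{ass:eps} and~\ref{ass:mu} to arrive at an inequality of the form
\begin{align*}
0=-\Im\spl\tilde A(\lambda)u,u\spr_{\tilde X}\geq -\Im(\lambda)\,\|Su\|_{\boldL^2_t(\partial\Omega)}^2,
\end{align*}
which, together with $-\Im\lambda>0$, forces $Su=0$. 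At this point my argument departs from the one in Corollary~\ref{cor:ressetnonempty}: instead of invoking Assumption~\ref{ass:UCP}, I exploit the structural fact that the only $\lambda$-dependent contribution to $\tilde A(\lambda)$ is $-\lambda\spl Su,Sv\spr_{\boldL^2_t(\partial\Omega)}$, which vanishes identically once $Su=0$. Consequently $u$ weakly solves $\tilde A_{\tilde X}(0)u=0$, and Assumption~\ref{ass:kappa} yields $u=0$. Theorem~\ref{thm:AwTctilde} then promotes injectivity to bijectivity on the entire open half-plane $\{\Im\lambda<0\}$.

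Having populated the resolvent set of $\tilde A_{\tilde X}(\cdot)$ with both the origin and the open half-plane $\{\Im\lambda<0\}$, I observe that $\tilde A_{\tilde X}(\cdot)$ is affine in $\lambda$, hence holomorphic on $\setC$, and weakly $\tilde T$-coercive everywhere on $\setC$ by Theorem~\ref{thm:AwTctilde}. The classical theorem on holomorphic Fredholm operator functions cited in Corollary~\ref{cor:ressetnonempty} (e.g.\ \cite[Proposition~A.8.4]{KozlovMazya:99}) then delivers the claimed spectral picture. The chief subtlety, relative to Corollary~\ref{cor:ressetnonempty}, is the implication $Su=0\Rightarrow u=0$: since $Su=0$ does not translate into a vanishing tangential trace, the unique continuation principle is no longer usable, and I must replace it by the algebraic collapse of the $\lambda$-term together with Assumption~\ref{ass:kappa}.
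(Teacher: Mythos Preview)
Your proof is correct and follows essentially the same approach as the paper: both use the imaginary-part energy identity to obtain $Su=0$, observe that this collapses $\tilde A_{\tilde X}(\lambda)u$ to $\tilde A_{\tilde X}(0)u$, apply Assumption~\ref{ass:kappa} for injectivity, invoke Theorem~\ref{thm:AwTctilde} for the Fredholm-index-zero property, and conclude via the analytic Fredholm theorem. The only difference is cosmetic ordering (you treat $\lambda=0$ first, the paper treats $\Im\lambda<0$ first).
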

\begin{proof}
Let $\lambda\in\setC$ with $\Im(\lambda)<0$ and $u \in X$ be so that $\tilde A_{\tilde X}(\lambda)u=0$. It follows
\begin{align*}
0=-\Im(\spl \tilde A_{\tilde X}(\lambda)u,u)\spr_{\tilde X})
\geq-\Im(\lambda)\|Su\|_{\boldL^2_t(\partial\Omega)}^2
\end{align*}
and hence $\tilde A_{\tilde X}(0)u=\tilde A_{\tilde X}(\lambda)u=0$. Due to Assumption~\ref{ass:kappa} it follows
$u=0$, i.e.\ $\tilde A_{\tilde X}(\lambda)$ is injective.
From Theorem~\ref{thm:AwTctilde} we know that $\tilde A_{\tilde X}(\lambda)$ is Fredholm with index zero for all
$\lambda\in\setC$ and hence $\tilde A_{\tilde X}(\lambda)$ is bijective, if $\Im(\lambda)<0$ or $\lambda=0$.
For the remaining claim see the proof of Corollary~\ref{cor:ressetnonemptytilde}.
\end{proof}

\subsection{Auxiliary formulation}\label{subsec:Aux}
A Galerkin approximation to~\eqref{eq:EVPtilde} doesn't yield a computational method yet, because the term
$\spl Su_n,Su_n'\spr_{\boldL^ 2_t(\partial\Omega)}$ needs to evaluated. Therefore we proceed as
in~\cite{CamanoLacknerMonk:17} and introduce an auxiliary variable. To this end let
\begin{align}\label{eq:Xtt}
\Zt&:=H^1_*(\partial\Omega), \quad
\spl \cdot,\cdot \spr_{\Zt}:=\spl\nabla_\partial\cdot,\nabla_\partial\cdot\spr_{\boldL^2_t(\partial\Omega)},\\
\Xtt&:=\Xt\times\Zt, \quad \spl (u,z),(u',z') \spr_{\Xtt}:=\spl u,u' \spr_{\Xt}+\spl z,z' \spr_{\Zt}
\end{align}
for all $(u,z),(u',z')\in\Xtt$ and for $l\in\{0,1\}$ let
\begin{align}\label{eq:Att}
\begin{aligned}
\spl \Att^l(\lambda)(u,z),(u',z')\spr_{\Xtt}&:=
\spl \mu^{-1} \curl u,\curl u'\spr_{\boldL^2(\Omega)}
-\omega^2\spl\epsilon u,u'\spr_{\boldL^2(\Omega)}\\
&+\lambda \spl z,\div_\partial\tr_{\nv\times} u'\spr_{H^{1}(\partial\Omega)\times H^{-1}(\partial\Omega)}\\
&+\lambda^l \spl \div_\partial\tr_{\nv\times} u,z'\spr_{H^{-1}(\partial\Omega)\times H^{1}(\partial\Omega)}\\
&+\lambda^l \spl \nabla_\partial z,\nabla_\partial z'\spr_{\boldL^2_t(\partial\Omega)}
\end{aligned}
\end{align}
for all $(u,z),(u',z')\in \Xtt$ and $\lambda\in\setC$. If the coefficients $\mu$, $\epsilon$ are real and symmetric,
the choice $l=1$ preserves the self adjointness of~\eqref{eq:Att}. This is of advantage, if one chooses to implement a
discretization which is based directly on~\eqref{eq:Att}. On the other hand if one aims to build the Schur-complement
with respect to the second component in a later discretization step, then the choice $l=0$ leads to no restriction on
$\lambda$. Let
\begin{align}\label{eq:Lambdal}
\Lambda_0:=\setC, \qquad \Lambda_1:=\setC\setminus\{0\}.
\end{align}

\begin{lemma}\label{lem:relAtAtt}
If $(\lambda,u)\in\setC\times\Xt\setminus\{0\}$ so that $\At(\lambda)u=0$, then $\Att^l(\lambda)(u,z)=0$ with
$z\in\Zt$ so that $Su=\nabla_\partial z$. Vice-versa, if $(\lambda,(u,z))\in\Lambda_l\times\Xtt\setminus\{0\}$
so that $\Att^l(\lambda)(u,z)=0$, then $Su=\nabla_\partial z$ and $\At(\lambda)u=0$.
\end{lemma}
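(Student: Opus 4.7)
The plan is to prove both directions by testing the block variational equation $\tilde{\tilde A}^l(\lambda)(u,z)(u',z') = 0$ against the two ``coordinate'' classes of test functions $(u',0)$ and $(0,z')$ separately, and matching the resulting identities against the definition of $S$ in~\eqref{eq:DefS} and the definition of $\tilde A(\lambda)$.

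For the forward direction, given $\tilde A(\lambda)u=0$, I would first define $z\in\Zt$ to be the unique solution in $H^1_*(\partial\Omega)$ of the variational problem in~\eqref{eq:DefS} associated with $u$, so that by construction $Su=\nabla_\partial z$. Testing $\Att^l(\lambda)(u,z)$ against $(0,z')$ produces exactly $\lambda^l$ times the defining equation of $z$, hence vanishes for every $z'\in\Zt$ (for any $\lambda\in\setC$, even when $\lambda^l=0$). Testing against $(u',0)$ yields the expression
\begin{align*}
\spl \mu^{-1}\curl u,\curl u'\spr_{\boldL^2(\Omega)} -\omega^2 \spl\epsilon u,u'\spr_{\boldL^2(\Omega)}
+\lambda\spl z,\div_\partial \tr_{\nv\times}u'\spr_{H^1\times H^{-1}}.
\end{align*}
To identify this with $\spl \tilde A(\lambda)u,u'\spr$, the key identity is
\begin{align*}
\spl z,\div_\partial \tr_{\nv\times}u'\spr_{H^1\times H^{-1}} = -\spl Su,Su'\spr_{\boldL^2_t(\partial\Omega)},
\end{align*}
which I would obtain by applying the variational problem defining $z_{u'}:=S$-preimage of $u'$ (i.e.\ $Su'=\nabla_\partial z_{u'}$) with the admissible test function $z\in H^1_*(\partial\Omega)$ and using symmetry of the $L^2$-inner product on the gradients. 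Substituting this identity gives $\spl\tilde A(\lambda)u,u'\spr=0$, which holds by hypothesis.

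For the reverse direction, given $\Att^l(\lambda)(u,z)=0$ with $\lambda\in\Lambda_l$, I would first test against $(0,z')$ and obtain $\lambda^l\bigl(\spl\div_\partial\tr_{\nv\times}u,z'\spr+\spl\nabla_\partial z,\nabla_\partial z'\spr\bigr)=0$. Here the restriction $\lambda\in\Lambda_l$ is precisely what allows me to divide by $\lambda^l\neq 0$ (automatic for $l=0$, and requiring $\lambda\neq 0$ for $l=1$); this is the only point where the distinction between $\Lambda_0$ and $\Lambda_1$ enters, and is in my view the main subtlety of the lemma. The resulting equation is the variational problem~\eqref{eq:DefS} for $u$, so $z$ coincides with the distinguished solution in $H^1_*(\partial\Omega)$ and $\nabla_\partial z=Su$. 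Testing against $(u',0)$ and invoking the same key identity as in the forward direction then produces $\spl\tilde A(\lambda)u,u'\spr=0$ for all $u'\in\Xt$. To finish, I would verify that the non-triviality assertion $u\neq 0$ is preserved: if $u=0$ then $Su=\nabla_\partial z=0$, and since $z\in H^1_*(\partial\Omega)$ has zero mean on the connected boundary, $z=0$, contradicting $(u,z)\neq 0$. Hence $u\neq 0$ and $(\lambda,u)$ is a genuine eigenpair of $\At(\cdot)$.
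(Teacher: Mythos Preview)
Your proposal is correct and follows essentially the same route as the paper: both arguments split the block equation into its $(u',0)$ and $(0,z')$ components, identify the $(0,z')$-row with the defining variational problem~\eqref{eq:DefS} for $S$, and reduce the $(u',0)$-row to $\spl\At(\lambda)u,u'\spr_{\Xt}$ via the identity $\spl z,\div_\partial\tr_{\nv\times}u'\spr=-\spl Su,Su'\spr$. The paper obtains this identity through the chain $\spl Su,Su'\spr=\spl\nabla_\partial z,Su'\spr=-\spl z,\div_\partial Su'\spr=-\spl z,\div_\partial\tr_{\nv\times}u'\spr$, whereas you obtain it by testing the variational problem for $Su'$ against $z$; these are the same computation read in opposite directions. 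Your write-up is in fact slightly more explicit than the paper's (which dispatches the reverse direction with ``follows like-wise''): you isolate precisely where the hypothesis $\lambda\in\Lambda_l$ is used, and you add the non-triviality check $u\neq 0$, which the statement does not literally assert but which is needed for the intended eigenvalue interpretation.
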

\begin{proof}
Let $(\lambda,u)\in\setC\times\Xt\setminus\{0\}$ so that $\At(\lambda)u=0$ and $z\in\Zt$ be so that
$\nabla_\partial z=Su$. It follows
\begin{align*}
0&=\spl \mu^{-1} \curl u,\curl u'\spr_{\boldL^2(\Omega)} -\omega^2\spl\epsilon u,u'\spr_{\boldL^2(\Omega)}
-\lambda \spl Su,Su'\spr_{\boldL^2_t(\partial\Omega)}\\
&=\spl \mu^{-1} \curl u,\curl u'\spr_{\boldL^2(\Omega)} -\omega^2\spl\epsilon u,u'\spr_{\boldL^2(\Omega)}
-\lambda \spl \nabla_\partial z,Su'\spr_{\boldL^2_t(\partial\Omega)}\\
&=\spl \mu^{-1} \curl u,\curl u'\spr_{\boldL^2(\Omega)} -\omega^2\spl\epsilon u,u'\spr_{\boldL^2(\Omega)}
+\lambda \spl z,\div_\partial Su'\spr_{H^{1}(\partial\Omega)\times H^{-1}(\partial\Omega)}\\
&=\spl \mu^{-1} \curl u,\curl u'\spr_{\boldL^2(\Omega)} -\omega^2\spl\epsilon u,u'\spr_{\boldL^2(\Omega)}
+\lambda \spl z,\div_\partial\tr_{\nv\times} u'\spr_{H^{1}(\partial\Omega)\times H^{-1}(\partial\Omega)}
\end{align*}
for each $u'\in\Xt$. It follows further
\begin{align*}
0=&\spl \div_\partial\tr_{\nv\times} u,z'\spr_{H^{-1}(\partial\Omega)\times H^{1}(\partial\Omega)}
+\spl \nabla_\partial z,\nabla_\partial z'\spr_{\boldL^2_t(\partial\Omega)}
\end{align*}
for each $z'\in\Zt$ from the definition of $S$ and $z$. The reverse direction follows like-wise.
\end{proof}

Let $B\in L(\Zt,\Xt)$ so that
\begin{align}
\spl Bz,u \spr_{\Xt} := \spl z,\div_\partial\tr_{\nv\times} u\spr_{H^{1}(\partial\Omega)\times H^{-1}(\partial\Omega)}
\end{align}
for all $z\in\Zt$, $u\in\Xt$. Then $\Att^l(\lambda)$ admits the block representation
\begin{align}
\Att^l(\lambda) =
\bpm \At_c-\omega^2\At_\epsilon & \lambda B \\ \lambda^l B^* & \lambda^l \I_{\Zt} \epm.
\end{align}
This leads us to define
\begin{align}\label{eq:Ttt}
\Ttt^l(\lambda):= \bpm \Tt &\\& \ol{\lambda}^{-l}\I_{\Zt} \epm, \quad \lambda\in\Lambda_l.
\end{align}

\begin{theorem}\label{thm:AwTctt}
Let $\epsilon$ suffice Assumption~\ref{ass:eps}, $\mu$ suffice Assumption~\ref{ass:mu} and $\Omega$ suffice
Assumption~\ref{ass:Domain}. Thence $\Att^l(\cdot)\colon \Lambda_l\to L(\Xtt)$ is weakly $\Ttt^l(\cdot)$-coercive.
\end{theorem}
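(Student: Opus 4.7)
The plan is to compute $\Ttt^l(\lambda)^*\Att^l(\lambda)$ in block form on $\Xtt=\Xt\times\Zt$ and display it as a compact perturbation of a coercive operator. Bijectivity of $\Ttt^l(\lambda)$ on $\Lambda_l$ is immediate from the block-diagonal form of~\eqref{eq:Ttt}: $\Tt$ is bijective by Theorem~\ref{thm:AwTctilde}, and $\ol{\lambda}^{-l}\I_{\Zt}$ is invertible exactly for $\lambda\in\Lambda_l$ by~\eqref{eq:Lambdal}.

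Multiplying the blocks yields
\begin{align*}
\Ttt^l(\lambda)^*\Att^l(\lambda) = \bpm \Tt^*(\At_c-\omega^2\At_\epsilon) & \lambda\,\Tt^* B \\ B^* & \I_{\Zt} \epm.
\end{align*}
The diagonal $(1,1)$-block equals $\Tt^*\At_{\Xt}(0)$; Theorem~\ref{thm:AwTctilde} specialized to $\lambda=0$ supplies a decomposition $\Tt^*(\At_c-\omega^2\At_\epsilon)=C_1+K_1$ with $C_1\in L(\Xt)$ coercive and $K_1\in L(\Xt)$ compact. The diagonal $(2,2)$-block is the identity on $\Zt$, which is coercive since $\|z\|_{\Zt}^2=\|\nabla_\partial z\|_{\boldL^2_t(\partial\Omega)}^2$ is the Hilbert norm of $\Zt$.

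The crux is the compactness of the off-diagonal entries, i.e.\ of $B\in L(\Zt,\Xt)$ and equivalently of $B^*\in L(\Xt,\Zt)$. Here the plan is to factor $B^*$ through the Riesz identification of the dual of $\Zt$: the paper recalls $\div_\partial\tr_{\nv\times}\in L\big(H(\curl;\Omega),H^{-1/2}(\partial\Omega)\big)$, the embedding $H^{-1/2}(\partial\Omega)\hookrightarrow H^{-1}(\partial\Omega)$ is compact by Rellich, and the pairing in~\eqref{eq:Att} identifies $H^{-1}(\partial\Omega)$ with a realization of $\Zt^*$. Composing these maps exhibits $B^*$ as a bounded operator precomposed with a compact embedding, so $B^*$ is compact, and then so is $B$ by Schauder.

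Setting $A_1:=\diag(C_1,\I_{\Zt})$ and $A_2:=\Ttt^l(\lambda)^*\Att^l(\lambda)-A_1$ yields $A_1$ coercive on $\Xtt$ and $A_2$ compact, as its $(1,1)$-entry is $K_1$, its off-diagonal entries are the compact operators $\lambda\,\Tt^*B$ and $B^*$, and its $(2,2)$-entry vanishes. This proves weak $\Ttt^l(\cdot)$-coercivity on $\Lambda_l$. The main obstacle is making the Riesz identification precise enough to conclude compactness of $B^*$ cleanly from the bound on $\div_\partial\tr_{\nv\times}$ and the Rellich embedding; the remaining steps are routine block algebra together with a reinvocation of Theorem~\ref{thm:AwTctilde}.
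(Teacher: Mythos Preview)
Your proof is correct and follows the paper's approach: split into a coercive diagonal part plus a compact remainder, with the key step being the compactness of $B$ (equivalently $B^*$). The only differences are organizational---you invoke Theorem~\ref{thm:AwTctilde} as a black box for the $(1,1)$-block where the paper repeats the $P_{\Vt}/P_{\Wt}$ splitting explicitly, and you factor $B^*$ through the compact embedding $H^{-1/2}(\partial\Omega)\hookrightarrow H^{-1}(\partial\Omega)$ whereas the paper factors $B$ through the dual compact embedding $E\colon H^{1}(\partial\Omega)\hookrightarrow H^{1/2}(\partial\Omega)$, writing $B=(\iota\,\div_\partial\tr_{\nv\times})^*E$.
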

\begin{proof}
Let
\begin{align*}
A_1:=\bpm P_{\Vt}(\At_c+\At_{l^2})P_{\Vt} -\omega^2P_{\Wt} \At_\epsilon P_{\Wt} &\\& \lambda^l \I_{\Zt} \epm.
\end{align*}
and
\begin{align*}
A_2:=\bpm -P_{\Vt}(\omega^2\At_\epsilon+\At_{l^2})P_{\Vt} -\omega^2( P_{\tilde V} \At_\epsilon P_{\tilde W}+
P_{\tilde W} \At_\epsilon P_{\tilde V}) & \lambda B\\ \lambda^l B^*& \epm.
\end{align*}
so that $\Att^l(\lambda)=A_1+A_2$. It follows
\begin{align*}
\Re (\spl A_1(u,z),\Ttt^l(\lambda)(u,z) \spr_{\Xtt}) \geq \min(1,c_\mu,c_\epsilon) \|(u,z)\|_{\Xtt}^2
\end{align*}
for each $(u,z)\in\Xtt$, i.e.\ $\Ttt^l(\lambda)^*A_1$ is coercive. Let $\iota\in L\big(H^{-1/2}(\partial\Omega),
H^{1/2}(\partial\Omega)\big)$ be the isomorphism so that
$\spl \phi,\phi' \spr_{H^{1/2}(\partial\Omega)\times H^{-1/2}(\partial\Omega)}
=\spl \phi,\iota \phi' \spr_{H^{1/2}(\partial\Omega)}$ for all $\phi\in H^{1/2}(\partial\Omega)$ and
$\phi'\in H^{-1/2}(\partial\Omega)$. Let $E\in L\big(H^{1}(\partial\Omega),H^{1/2}(\partial\Omega)\big)$ be the embedding
operator. Then
\begin{align*}
\spl Bz,u \spr_{\Xt} &= \spl z,\div_\partial\tr_{\nv\times} u\spr_{H^{1}(\partial\Omega)\times H^{-1}(\partial\Omega)} \\
&= \spl Ez,\div_\partial\tr_{\nv\times} u\spr_{H^{1/2}(\partial\Omega)\times H^{-1/2}(\partial\Omega)} \\
&= \spl Ez,\iota\div_\partial\tr_{\nv\times} u\spr_{H^{1/2}(\partial\Omega)} \\
&= \spl (\iota\div_\partial\tr_{\nv\times})^*Ez, u\spr_{\Xt},
\end{align*}
i.e.\ $B=(\iota\div_\partial\tr_{\nv\times})^*E$. Since $E$ is compact, so are $B$ and $B^*$. The remaining terms of $A_2$
are compact due to \eqref{eq:CostabelDomain}. Hence $\Ttt^l(\lambda)^*A_2$ is compact too.
\end{proof}

\begin{corollary}\label{cor:ressetnonemptytt}
Let Assumptions~\ref{ass:eps}, \ref{ass:mu}, \ref{ass:Domain} and \ref{ass:kappa} hold true.
Then $\Att^l(\lambda)$ is bijective for all $\lambda\in\setC$ with $\Im(\lambda)<0$.
\end{corollary}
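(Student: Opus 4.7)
The plan is to reduce the claim to the corresponding statement for the unmodified operator $\At_{\Xt}(\cdot)$ from Corollary~\ref{cor:ressetnonemptytilde} by exploiting the equivalence of eigenpairs provided by Lemma~\ref{lem:relAtAtt}. First I would invoke Theorem~\ref{thm:AwTctt} to deduce that $\Att^l(\lambda)$ is weakly $\Ttt^l(\lambda)$-coercive on $\Lambda_l$, and hence Fredholm of index zero there. Since $\Im(\lambda)<0$ in particular excludes $\lambda=0$, we have $\lambda\in\Lambda_l$ for both choices $l\in\{0,1\}$, so bijectivity will follow as soon as injectivity is established.

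For injectivity, I would start with an arbitrary $(u,z)\in\Xtt$ satisfying $\Att^l(\lambda)(u,z)=0$. The reverse direction of Lemma~\ref{lem:relAtAtt} applies precisely on $\Lambda_l$ and yields the two conclusions $\At(\lambda)u=0$ and $Su=\nabla_\partial z$. From Corollary~\ref{cor:ressetnonemptytilde}, $\At_{\Xt}(\lambda)$ is bijective whenever $\Im(\lambda)<0$, and therefore $u=0$. Then $Su=0$ forces $\nabla_\partial z=0$, so $z$ is constant on $\partial\Omega$; the requirement $z\in\Zt=H^1_*(\partial\Omega)$ (vanishing mean) upgrades this to $z=0$.

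There is no genuinely hard step here: all the heavy lifting has been done in Theorem~\ref{thm:AwTctt} (Fredholmness), Lemma~\ref{lem:relAtAtt} (block-system/original-system equivalence) and Corollary~\ref{cor:ressetnonemptytilde} (injectivity of $\At_{\Xt}(\lambda)$ in the lower half-plane, which rests on Assumption~\ref{ass:UCP}-type injectivity at $\lambda=0$ coming from Assumption~\ref{ass:kappa}). The only subtlety to double-check is that one is genuinely in $\Lambda_l$ when applying Lemma~\ref{lem:relAtAtt}; this is ensured by $\Im(\lambda)<0$. I would close the proof by remarking that injective Fredholm of index zero gives bijective.
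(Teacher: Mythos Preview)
Your proposal is correct and follows essentially the same approach as the paper, which simply writes ``Follows from Theorem~\ref{thm:AwTctt}, Lemma~\ref{lem:relAtAtt} and Corollary~\ref{cor:ressetnonemptytilde}.'' You have merely spelled out how these three ingredients combine (Fredholmness from Theorem~\ref{thm:AwTctt}, reduction to $\At_{\Xt}(\lambda)u=0$ and $Su=\nabla_\partial z$ via Lemma~\ref{lem:relAtAtt}, and then $u=0$ from Corollary~\ref{cor:ressetnonemptytilde} followed by $z=0$ from the mean-zero constraint), which is exactly the intended argument.
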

\begin{proof}
Follows from Theorem~\ref{thm:AwTctt}, Lemma~\ref{lem:relAtAtt} and Corollary~\ref{cor:ressetnonemptytilde}.
\end{proof}

\section{Compatible approximation of the modified Stekloff eigenvalue problem}\label{sec:approximationMod}
In this section we discuss Galerkin approximations of $\Att^l(\cdot)$. We proceed as in Section~\ref{sec:approximation}.
We embrace the basic Assumptions~\ref{ass:XnItilde}, \ref{ass:ZnItilde} and in Assumption~\ref{ass:XnIItilde} the
existence of uniformly bounded commuting projections like in~\cite{ArnoldFalkWinther:10}.
We report in Corollary~\ref{cor:Tncompatiblett} that for Galerkin approximations which satisfy these three assumptions,
we can construct a sequence of operator functions $\Ttt^l_n(\lambda)\in L(\Xtt_n)$ which converges to $\Ttt^l(\lambda)$
in discrete norm~\eqref{eq:discretenorm} at each $\lambda\in\Lambda_l$. Consequently we report in
Theorem~\ref{thm:SpecApprtt} that the abstract framework of~\cite{Halla:19Tcomp} is applicable. Finally, we discuss some
topics concerning the computational implementation.\\

Consider the following basic assumptions.
\begin{assumption}\label{ass:XnItilde}
Let $(\Xt_n)_{n\in\setN}$ be so that $\Xt_n\subset \Xt$, $\dim \Xt_n<\infty$ for each $n\in\setN$ and
\begin{align}
\lim_{n\in\setN}\inf_{u'\in \Xt_n}\|u-u'\|_{\Xt}=0\text{ for each }u\in \Xt.
\end{align}
\end{assumption}

\begin{assumption}\label{ass:ZnItilde}
Let $(\Zt_n)_{n\in\setN}$ be so that $\Zt_n\subset \Zt$, $\dim \Zt_n<\infty$ for each $n\in\setN$ and
\begin{align}
\lim_{n\in\setN}\inf_{z'\in \Zt_n}\|z-z'\|_{\Zt}=0\text{ for each }z\in \Zt.
\end{align}
\end{assumption}

Let
\begin{align}
\Xtt_n:=\Xt_n\times \Zt_n.
\end{align}
Consider the following additional assumption.
\begin{assumption}\label{ass:XnIItilde}
There exists $(\pi^{\Xt}_n)_{n\in\setN}$ so that
\begin{subequations}
\begin{align}
\pi^{\tilde X}_n\in L\big(\boldL^2(\Omega)\big) \text{ is a projector with }\tilde X_n = \ran \pi^{\tilde X}_n,\\
\sup_{n\in\setN} \|\pi^{\tilde X}_n\|_{L(\boldL^2(\Omega))} < +\infty.
\end{align}
\end{subequations}
Let $\tilde Y:=\boldL^2(\Omega)$. There exist sequences $(\tilde Y_n, \pi^{\tilde Y}_n)_{n\in\setN}$ so that
\begin{subequations}
\begin{align}
\tilde Y_n\subset Y, \qquad \lim_{n\in\setN}\inf_{u'\in \tilde Y_n}\|u-u'\|_{\tilde Y}=0,\\
\pi^{\tilde Y}_n\in L(\tilde Y) \text{ is a projector with }\tilde Y_n \subset \ran \pi^{\tilde Y}_n,\\
\sup_{n\in\setN} \|\pi^{\tilde Y}_n\|_{L(\tilde Y)} < +\infty.
\end{align}
\end{subequations}
Denote $\tilde E\in L\big(\tilde X,\boldL^2(\Omega)\big)$ the embedding operator and set
\begin{align}
\tilde \pi_n:=\pi^{\tilde X}_n\tilde E.
\end{align}
Further let
\begin{align}
\curl\circ\tilde \pi_n u=\pi^{\tilde Y}_n\circ\curl u
\end{align}
for each $u\in \tilde X$.
\end{assumption}

\begin{lemma}\label{lem:QuasiBestApprtilde}
Let Assumptions~\ref{ass:XnItilde} and \ref{ass:XnIItilde} hold true. Then the projections $\pi^{\tilde X}_n$ and
$\pi^{\tilde Y}_n$ converge point-wise to the identity in $\boldL^2(\Omega)$.
\end{lemma}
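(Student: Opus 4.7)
The plan is to mimic the proof of Lemma~\ref{lem:QuasiBestAppr} verbatim, treating the two projection families separately and using only the standard quasi-best-approximation trick.

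First I would handle $\pi^{\tilde X}_n$. Fix $u\in \boldL^2(\Omega)$ and an arbitrary $u_n\in \tilde X_n$. Since $\pi^{\tilde X}_n$ is a projector with range $\tilde X_n$ (so $\pi^{\tilde X}_n u_n=u_n$), I would write
\begin{align*}
\|(1-\pi^{\tilde X}_n)u\|_{\boldL^2(\Omega)}
&=\|(1-\pi^{\tilde X}_n)(u-u_n)\|_{\boldL^2(\Omega)}\\
&\leq \bigl(1+\sup_{n\in\setN}\|\pi^{\tilde X}_n\|_{L(\boldL^2(\Omega))}\bigr)\,\|u-u_n\|_{\boldL^2(\Omega)},
\end{align*}
where the supremum is finite by Assumption~\ref{ass:XnIItilde}. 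Taking the infimum over $u_n\in\tilde X_n$ turns this into a quasi-best-approximation estimate. To conclude pointwise convergence on all of $\boldL^2(\Omega)$ I still need $\inf_{u_n\in\tilde X_n}\|u-u_n\|_{\boldL^2(\Omega)}\to 0$. For $u\in\tilde X=H(\curl;\Omega)$ this follows from Assumption~\ref{ass:XnItilde} together with the continuous embedding $\tilde X\hookrightarrow\boldL^2(\Omega)$. For general $u\in\boldL^2(\Omega)$ I would invoke the density of $\tilde X$ in $\boldL^2(\Omega)$ (e.g.\ via smooth compactly supported fields) and a standard $3\varepsilon$-argument using the uniform bound on $\|1-\pi^{\tilde X}_n\|_{L(\boldL^2(\Omega))}$.

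The argument for $\pi^{\tilde Y}_n$ is even cleaner: the space $\tilde Y=\boldL^2(\Omega)$ coincides with the ambient space, Assumption~\ref{ass:XnIItilde} asserts uniform boundedness of $\pi^{\tilde Y}_n$ in $L(\tilde Y)$ and density $\lim_n\inf_{u'\in\tilde Y_n}\|u-u'\|_{\tilde Y}=0$ for every $u\in\tilde Y$, so no separate density step is needed. The same chain of inequalities as above, with $\tilde Y_n\subset\ran\pi^{\tilde Y}_n$ in place of $\tilde X_n=\ran\pi^{\tilde X}_n$ (it suffices that $\pi^{\tilde Y}_n$ fixes the elements of $\tilde Y_n$, which follows from the projector property together with the inclusion), yields the conclusion.

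There is no real obstacle; the only point that requires a moment of care is that Assumption~\ref{ass:XnItilde} is phrased in the $\tilde X$-norm rather than the $\boldL^2$-norm, so the density of $\tilde X$ in $\boldL^2(\Omega)$ must be invoked to extend pointwise convergence of $\pi^{\tilde X}_n$ from the dense subspace $\tilde X$ to all of $\boldL^2(\Omega)$. Everything else is identical to the proof already carried out in Lemma~\ref{lem:QuasiBestAppr}.
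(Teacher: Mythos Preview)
Your proposal is correct and is exactly what the paper intends: its proof of this lemma is simply ``Proceed as for Lemma~\ref{lem:QuasiBestAppr}'', and you have reproduced that argument faithfully, including the density step for $\pi^{\tilde X}_n$ and the direct argument for $\pi^{\tilde Y}_n$.
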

\begin{proof}
Proceed as for Lemma~\ref{lem:QuasiBestAppr}.
\end{proof}

\begin{lemma}\label{lem:discnormPtilde}
Let Assumptions~\ref{ass:eps}, \ref{ass:Domain}, \ref{ass:XnItilde} and \ref{ass:XnIItilde} hold true. Then
\begin{subequations}
\begin{align}
\lim_{n\in\setN}\inf_{u\in \tilde X_n\setminus\{0\}}\|(1-\tilde \pi_n)P_{\tilde V}u\|_{\tilde X}/\|u\|_{\tilde X}&=0,\\
\lim_{n\in\setN}\inf_{u\in \tilde X_n\setminus\{0\}}\|(1-\tilde \pi_n)P_{\tilde W}u\|_{\tilde X}/\|u\|_{\tilde X}&=0.
\end{align}
\end{subequations}
\end{lemma}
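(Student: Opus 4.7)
The strategy is to follow the pattern of Lemma~\ref{lem:discnormP}, exploiting two simplifications: the $\tilde X$-norm now involves only $\curl$ (no boundary trace), and Assumption~\ref{ass:XnIItilde} only provides commutation with $\curl$. The first step I would take is to observe that the two claims are bounds on the same quantity. Since $\tilde\pi_n u_n = u_n$ for every $u_n \in \tilde X_n$, the decomposition $u_n = P_{\tilde V} u_n + P_{\tilde W} u_n$ yields
\begin{align*}
(1-\tilde\pi_n) P_{\tilde V} u_n = -(1-\tilde\pi_n) P_{\tilde W} u_n,
\end{align*}
so it is enough to estimate one of them.

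The second step is to eliminate the $\curl$ contribution from the $\tilde X$-norm. Because $P_{\tilde W} u_n \in \tilde W = H(\curl^0;\Omega)$, one has $\curl P_{\tilde W} u_n = 0$, and the commuting identity $\curl \circ \tilde\pi_n = \pi^{\tilde Y}_n \circ \curl$ of Assumption~\ref{ass:XnIItilde} gives $\curl \tilde\pi_n P_{\tilde W} u_n = 0$. Therefore $\curl (1-\tilde\pi_n) P_{\tilde W} u_n = 0$, and combining this with $\tilde\pi_n = \pi^{\tilde X}_n \tilde E$ and the identity from the first step,
\begin{align*}
\|(1-\tilde\pi_n) P_{\tilde W} u_n\|_{\tilde X} = \|(1-\tilde\pi_n) P_{\tilde W} u_n\|_{\boldL^2(\Omega)} = \|(1-\pi^{\tilde X}_n)\, \tilde E P_{\tilde V} u_n\|_{\boldL^2(\Omega)}.
\end{align*}

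The third and decisive step is to show
\begin{align*}
\lim_{n\in\setN}\|(1-\pi^{\tilde X}_n)\, \tilde E P_{\tilde V}\|_{L(\tilde X,\boldL^2(\Omega))} = 0,
\end{align*}
which, precisely as in the proof of Lemma~\ref{lem:discnormP}, I would deduce from (i) the pointwise convergence $\pi^{\tilde X}_n \to \Id$ on $\boldL^2(\Omega)$ established in Lemma~\ref{lem:QuasiBestApprtilde}, together with (ii) compactness of $\tilde E P_{\tilde V}\colon \tilde X \to \boldL^2(\Omega)$. For (ii) one invokes that $\ran P_{\tilde V} \subset \tilde V = H(\curl,\div^0,\tr_{\nv\cdot}^0;\Omega)$ and that this space embeds compactly into $\boldL^2(\Omega)$. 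The main subtle point of the proof lies here: unlike in Lemma~\ref{lem:discnormP}, where vanishing of $\tr_{\nv\times}$ on $\ran(1-P_{W_2})$ permitted a direct appeal to~\eqref{eq:CostabelDomain}, here the constraint on $\tilde V$ is vanishing of the normal trace, and one must invoke the symmetric, normal-trace counterpart of~\eqref{eq:CostabelDomain} — a classical Weber/Costabel-type compactness result for bounded Lipschitz domains, also contained in~\cite{Costabel:90}. Notably, no boundary-side projection is required, which is exactly why Assumption~\ref{ass:XnIItilde} comprises only the $\boldL^2(\Omega)$-projections $\pi^{\tilde X}_n$ and $\pi^{\tilde Y}_n$.
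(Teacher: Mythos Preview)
Your proposal is correct and follows essentially the same route the paper intends by ``Proceed as for Lemma~\ref{lem:discnormP}'': kill the $\curl$-contribution via the commuting projection, then reduce to an $\boldL^2$-estimate controlled by compactness of $\tilde E|_{\tilde V}$ together with pointwise convergence of $\pi_n^{\tilde X}$. Your observation that $(1-\tilde\pi_n)P_{\tilde V}u_n=-(1-\tilde\pi_n)P_{\tilde W}u_n$ collapses the two claims into one is a neat streamlining over treating $P_{\tilde V}$ and $P_{\tilde W}$ separately as in Lemma~\ref{lem:discnormP}, and your identification of the key subtlety---that here one needs the compact embedding of $H(\curl,\div,\tr_{\nv\cdot}^0;\Omega)$ into $\boldL^2(\Omega)$ (the normal-trace counterpart of~\eqref{eq:CostabelDomain}) rather than~\eqref{eq:CostabelDomain} itself---is exactly right and worth making explicit.
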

\begin{proof}
Proceed as for Lemma~\ref{lem:discnormP}.
\end{proof}

\begin{corollary}\label{cor:Tncompatiblett}
Let Assumptions~\ref{ass:eps}, \ref{ass:Domain}, \ref{ass:XnItilde}, \ref{ass:ZnItilde} and \ref{ass:XnIItilde} hold
true. Let $\tilde T_n\in L(\tilde X_n)$ be defined as $\tilde T_n:=\tilde \pi_n \tilde T|_{\tilde X_n}$ and
$\Ttt^l_n(\lambda)\in L(\Xtt_n)$ as
\begin{align}
\Ttt^l_n(\lambda) := \bpm \Tt_n &\\& \ol{\lambda}^{-l}\I_{\Zt_n} \epm.
\end{align}
for $\lambda\in\Lambda_l$. Then
\begin{align}
\lim_{n\in\setN} \|\Ttt^l(\lambda)-\Ttt_n^l(\lambda)\|_n = 0
\end{align}
at each $\lambda\in\Lambda_l$.
\end{corollary}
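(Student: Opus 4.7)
The plan is to reduce the block-diagonal operator norm estimate to the scalar estimate for $\tilde T - \tilde T_n$ that was already handled in the unmodified case. First I would observe that for any $(u_n,z_n)\in\Xtt_n$ the difference $\big(\Ttt^l(\lambda)-\Ttt_n^l(\lambda)\big)(u_n,z_n)$ decomposes into two independent components. The lower $\Zt$-component vanishes identically: since $\Zt_n\subset\Zt$, the restriction of the identity $\ol{\lambda}^{-l}\I_{\Zt}$ to $\Zt_n$ coincides with $\ol{\lambda}^{-l}\I_{\Zt_n}$. Consequently
\begin{align*}
\|\big(\Ttt^l(\lambda)-\Ttt_n^l(\lambda)\big)(u_n,z_n)\|_{\Xtt}
= \|(\Tt-\Tt_n)u_n\|_{\Xt},
\end{align*}
and since $\|u_n\|_{\Xt}\leq\|(u_n,z_n)\|_{\Xtt}$, it suffices to prove $\lim_{n\in\setN}\|\Tt-\Tt_n\|_n=0$.

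Next I would treat $\Tt-\Tt_n$ exactly as in Corollary~\ref{cor:Tncompatible}. For $u_n\in\Xt_n$ one has $(\Tt-\Tt_n)u_n = (1-\tilde\pi_n)\Tt u_n$, and plugging in the definition $\Tt = P_{\Vt}-\omega^{-2}P_{\Wt}$ from~\eqref{eq:Ttilde} gives
\begin{align*}
\|(\Tt-\Tt_n)u_n\|_{\Xt}
\leq \|(1-\tilde\pi_n)P_{\Vt}u_n\|_{\Xt}
+ \omega^{-2}\|(1-\tilde\pi_n)P_{\Wt}u_n\|_{\Xt}.
\end{align*}
Dividing by $\|u_n\|_{\Xt}$, taking the supremum over $u_n\in\Xt_n\setminus\{0\}$ and applying Lemma~\ref{lem:discnormPtilde} to each of the two summands yields $\|\Tt-\Tt_n\|_n\to 0$.

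Combining the two steps gives $\|\Ttt^l(\lambda)-\Ttt_n^l(\lambda)\|_n\leq\|\Tt-\Tt_n\|_n\to 0$, which is uniform in $\lambda\in\Lambda_l$ (the dependence on $\lambda$ has cancelled out entirely in the lower block). There is no real obstacle here: the corollary is essentially a bookkeeping consequence of the block-diagonal form of $\Ttt^l(\lambda)$ together with Lemma~\ref{lem:discnormPtilde}, entirely parallel to the argument used for Corollary~\ref{cor:Tncompatible}. The only mild point to verify carefully is that the $\Zt$-block contributes nothing to the discrete norm, which hinges on the inclusion $\Zt_n\subset\Zt$ built into Assumption~\ref{ass:ZnItilde}.
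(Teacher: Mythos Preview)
Your proposal is correct and follows exactly the approach the paper indicates: the paper's proof simply reads ``Proceed as for Corollary~\ref{cor:Tncompatible},'' and you have carried out precisely that reduction---observing that the $\Zt$-block contributes nothing, then applying the triangle inequality to the definition of $\Tt$ and invoking Lemma~\ref{lem:discnormPtilde}. Your writeup is a faithful expansion of what the paper leaves implicit.
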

\begin{proof}
Proceed as for Corollary~\ref{cor:Tncompatible}.
\end{proof}

\begin{theorem}\label{thm:SpecApprtt}
Let Assumptions~\ref{ass:eps}, \ref{ass:mu}, \ref{ass:Domain} and \ref{ass:kappa} hold true.
Let $\Xtt$, $\Att^l(\cdot)$, $\Ttt^l(\cdot)$ and $\Lambda_l$ be as defined in~\eqref{eq:Xtt}, \eqref{eq:Att},
\eqref{eq:Ttt} and \eqref{eq:Lambdal} respectively. Let Assumptions~\ref{ass:XnItilde}, \ref{ass:ZnItilde}
and~\ref{ass:XnIItilde} hold true.
Then $\Att^l(\cdot)\colon\Lambda_l\to L(\Xtt)$ is a holomorphic weakly $\Ttt^l(\cdot)$-coercive operator function
with non-empty resolvent set and the sequence of Galerkin approximations
$\big(\Ptt_n \Att^l(\cdot)|_{\Xtt_n}\colon \Lambda_l \to L(\Xtt_n)\big)_{n\in\setN}$ is $\Ttt^l(\cdot)$-compatible.
Thus Corollary~2.8 of~\cite{Halla:19Tcomp} is applicable.
\end{theorem}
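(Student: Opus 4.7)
The plan is to assemble the three ingredients required by \cite[Corollary~2.8]{Halla:19Tcomp}: holomorphy of $\Att^l(\cdot)$, its weak $\Ttt^l(\cdot)$-coercivity on $\Lambda_l$ with non-empty resolvent set, and $\Ttt^l(\cdot)$-compatibility of the Galerkin sequence. Each of these has already been prepared in the preceding subsection, so the proof reduces to citing the appropriate results.

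First I would observe holomorphy directly from the defining formula~\eqref{eq:Att}: viewed as a map $\Lambda_l\to L(\Xtt)$, $\Att^l(\cdot)$ is polynomial in $\lambda$ (affine for $l=0$ and quadratic for $l=1$), hence entire, and in particular holomorphic on $\Lambda_l$. Next, weak $\Ttt^l(\cdot)$-coercivity on all of $\Lambda_l$ is precisely the conclusion of Theorem~\ref{thm:AwTctt}, which is applicable because Assumptions~\ref{ass:eps}, \ref{ass:mu} and~\ref{ass:Domain} are in force.

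For the non-emptiness of the resolvent set, I would invoke Corollary~\ref{cor:ressetnonemptytt}: under Assumptions~\ref{ass:eps}, \ref{ass:mu}, \ref{ass:Domain} and \ref{ass:kappa}, $\Att^l(\lambda)$ is bijective for every $\lambda\in\setC$ with $\Im(\lambda)<0$, and these parameters lie in $\Lambda_l$ for both $l=0$ and $l=1$. Hence $\rho(\Att^l(\cdot))\neq\emptyset$. Finally, $\Ttt^l(\cdot)$-compatibility of the Galerkin approximations $\Ptt_n\Att^l(\cdot)|_{\Xtt_n}$ is the content of Corollary~\ref{cor:Tncompatiblett}, applicable because Assumptions~\ref{ass:XnItilde}, \ref{ass:ZnItilde} and~\ref{ass:XnIItilde} are assumed. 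Once these three properties are gathered, all hypotheses of \cite[Corollary~2.8]{Halla:19Tcomp} are met, and the theorem follows.

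I do not expect any real obstacle: the genuine work was done in Theorem~\ref{thm:AwTctt}, Corollary~\ref{cor:ressetnonemptytt} and Corollary~\ref{cor:Tncompatiblett}, and this theorem is a pure assembly statement. The only point that warrants a brief verbal check is that the choice of $\Lambda_l$ in~\eqref{eq:Lambdal} is compatible with both the definition of $\Ttt^l(\lambda)$ in~\eqref{eq:Ttt} (which requires $\lambda\neq 0$ when $l=1$ because of the factor $\ol{\lambda}^{-l}$) and with the location of the sample point $\Im(\lambda)<0$ used for non-emptiness of the resolvent set; both conditions are clearly satisfied by the definition $\Lambda_0=\setC$, $\Lambda_1=\setC\setminus\{0\}$.
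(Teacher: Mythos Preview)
Your proposal is correct and follows exactly the paper's own proof, which simply cites Theorem~\ref{thm:AwTctt}, Corollary~\ref{cor:ressetnonemptytt} and Corollary~\ref{cor:Tncompatiblett}. One inconsequential slip: for $l=1$ the operator function $\Att^1(\cdot)$ is still affine in $\lambda$ (all $\lambda$-dependent terms in~\eqref{eq:Att} are of degree one), not quadratic; this does not affect your holomorphy argument.
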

\begin{proof}
Follows from Theorem~\ref{thm:AwTctt}, Corollary~\ref{cor:ressetnonemptytt} and Corollary~\ref{cor:Tncompatiblett}.
\end{proof}

Theorem~\ref{thm:SpecApprtt} tells that suitable Galerkin approximations to $\Att^l(\cdot)$ yield reliable approximations.
In particular, if $\Xt_n$ and $\Zt_n$ are chosen as finite element spaces with fixed polynomial degrees $p_{\Xt}$, $p_{\Zt}$
and decreasing mesh width $h(n)$, Theorem~\ref{thm:SpecApprtt} tells that one should choose $p_{\Xt}=p_{\Zt}$ to obtain
asymptoticly optimal convergence rates.

We move on and discuss further issues related to the computational implementation. We note that if
$\Xt_n\subset H(\curl,\tr_{\nv\times};\Omega)$, then the duality pairs in~\eqref{eq:Att} can be evaluated as integrals:
\begin{align*}
\spl z_n,\div_\partial\tr_{\nv\times} u_n\spr_{H^{1}(\partial\Omega)\times H^{-1}(\partial\Omega)}
= -\spl \nabla_\partial z_n,\tr_{\nv\times} u_n\spr_{\boldL^2_t(\partial\Omega)}.
\end{align*}
Let further for $u_n\in\Xt_n$, $z_n$ be the solution to find $z_n\in\Zt_n$ so that
\begin{align}
\spl \nabla_\partial z_n, \nabla_\partial z_n' \spr_{\boldL_t^2(\partial\Omega)}
= -\spl \div_\partial \tr_{\nv\times} u_n, z_n' \spr_{H^{-1}(\partial\Omega)\times H^1(\partial\Omega)}
\end{align}
for all $z_n'\in\Zt_n$ and set
\begin{align}\label{eq:DefSn}
S_nu:=\nabla_\partial z_n.
\end{align}
From the construction of $S_n$ it follows $S_n\in L\big(\Xt_n,\boldL_t^2(\partial\Omega)\big)$ and further
\begin{align}
S_nu=P_{\nabla_\partial}^n\tr_{\nv\times} u_n
\end{align}
for $u_n\in H(\curl,\tr_{\nv\times};\Omega)$ with $P_{\nabla_\partial}^n$ being the $\boldL_t^2(\partial\Omega)$-orthogonal
projection onto $\nabla\Zt_n$. Let further $\At_n(\lambda)\in L(\Xt_n)$ be defined by
\begin{align}\label{eq:Atn}
\begin{aligned}
\spl \At_n(\lambda)u_n,u'_n\spr_{\Xt}&:=
\spl \mu^{-1} \curl u_n,\curl u_n'\spr_{\boldL_t^2(\partial\Omega)}
-\omega^2\spl\epsilon u_n,u'_n\spr_{\boldL_t^2(\partial\Omega)}\\
&-\lambda \spl S_n u_n,S_n u'_n\spr_{\boldL_t^2(\partial\Omega)}
\quad\text{for all }u_n,u'_n\in \Xt_n, \lambda\in\setC,
\end{aligned}
\end{align}
i.e.\ $\At_n(\lambda)$ is the Schur-complement of $\Ptt_n \Att^0(\lambda)|_{\Xtt_n}$ with respect to $z_n\in\Zt_n$.
Obviously $\At_n(\cdot)$ is a Galerkin approximation with variational crime $S_n^*S_n\neq \Pt_nS^*S|_{\Xt_n}$
of $\At(\cdot)$. The approximation properties of $\At_n(\cdot)$ to $\At(\cdot)$ are already provided by our previous
analysis, i.e.\ our analysis technique avoided to the discuss the variational crime directly.
If further $\Xt_n\subset H(\curl,\tr_{\nv\times};\Omega)$, then
\begin{align}
\begin{aligned}
\spl S_n u_n,S_n u'_n\spr_{\boldL_t^2(\partial\Omega)}
&= \spl P_{\nabla_\partial}^n\tr_{\nv\times} u_n,P_{\nabla_\partial}^n\tr_{\nv\times} u'_n\spr_{\boldL_t^2(\partial\Omega)}\\
&= \spl P_{\nabla_\partial}^n\tr_{\nv\times} u_n,\tr_{\nv\times} u'_n\spr_{\boldL_t^2(\partial\Omega)}\\
&= \spl S_n u_n,\tr_{\nv\times} u'_n\spr_{\boldL_t^2(\partial\Omega)}.
\end{aligned}
\end{align}
Let $(z_n)_{n=1}^N$ be a basis of $\Zt_n$ and consider the matrix $M\in\setC^ {N\times N}$ with entries
\begin{align}
M_{n,m}:=\spl \nabla_\partial z_n, \nabla_\partial z_m \spr_{\boldL_t^2(\partial\Omega)}.
\end{align}
To implement the operator $S_n$, the matrix $M$ needs to be inverted. However, due to $\Zt_n\subset\Zt=
H^1_*(\partial\Omega)$ the matrix $M$ is dense. To obtain a sparse matrix $M$ the following procedure was suggested
in~\cite{CamanoLacknerMonk:17}. Let $\gamma>0$ be small and $\setK:=\spn \{1\}$ be the space of constant functions.
For $u_n\in\Xt_n$ let $z_n$ be the solution to find $z_n\in\Zt_n\oplus\setK\subset H^1(\partial\Omega)$ so that
\begin{align}\label{eq:EqSgamma}
\spl \nabla_\partial z_n, \nabla_\partial z_n' \spr_{\boldL_t^2(\partial\Omega)}
+\gamma\spl z_n,z_n'\spr_{L^2(\partial\Omega)}
= -\spl \div_\partial \tr_{\nv\times} u_n, z_n' \spr_{H^{-1}(\partial\Omega)\times H^1(\partial\Omega)}
\end{align}
for all $z_n'\in\Zt_n\oplus\setK\subset H^1(\partial\Omega)$ and set
\begin{align}\label{eq:DefSngamma}
S_n^ \gamma u_n:=\nabla_\partial z_n.
\end{align}
We analyze this modification in two steps. First we consider the perturbation of the sesquilinear form
$\spl \nabla_\partial \cdot, \nabla_\partial\cdot \spr_{\boldL_t^2(\partial\Omega)}$ to
$\spl \nabla_\partial \cdot, \nabla_\partial\cdot \spr_{\boldL_t^2(\partial\Omega)}
+\gamma\spl \cdot,\cdot\spr_{L^2(\partial\Omega)}$ on the space $\Zt_n\subset H^1_*(\partial\Omega)$. The analysis of
such a perturbation is straight forward and of magnitude $\gamma$. Secondly we note that the solution
$z_n\in\Zt_n\oplus\setK\subset H^1(\partial\Omega)$ to~\eqref{eq:EqSgamma} satisfies $\spl z_n,1\spr_{L^2(\partial\Omega)}
=0$, i.e.\ $z_n\in\Zt_n$. Thus a replacement of $\Zt_n\subset H^1_*(\partial\Omega)$ by
$\Zt_n\oplus\setK\subset H^1(\partial\Omega)$ doesn't change the respective solution to~\eqref{eq:EqSgamma} and hence
no additional error is produced.

\bibliographystyle{amsplain}
\bibliography{../../../bibliography}
\end{document}